\DeclareMathOperator{\card}{card}
\newcommand{\ep}{\epsilon}
\newcommand{\del}{\delta}
\newcommand{\reals}{\mathbb{R}}
\newcommand{\nats}{\mathbb{N}}
\newcommand{\Heis}{\mathbb{H}}
\newcommand{\nbhd}{\mathcal{N}}
\newcommand{\inv}{^{-1}}
\newcommand{\ovl}{\overline}
\newcommand{\into}{\hookrightarrow}
\newcommand{\Hdim}{\mathcal{H}}
\newcommand{\subeq}{\subseteq}
\newcommand{\bslash}{\backslash}
\newcommand{\iQ}{\mathcal{Q}}
\newcommand{\eL}{\textrm{L}}
\def\diam{\operatorname{diam}}
\def\card{\operatorname{card}}
\def\dist{\operatorname{dist}}
\def\modu{\operatorname{\text{mod}}}
\def\length{\operatorname{length}}
\def\Lip{\operatorname{Lip}}
\def\lip{\operatorname{lip}}
\def\loc{\operatorname{loc}}
\newcommand{\Lloc}{L_{\loc}}
\def\Xint#1{\mathchoice
{\XXint\displaystyle\textstyle{#1}}%
{\XXint\textstyle\scriptstyle{#1}}%
{\XXint\scriptstyle\scriptscriptstyle{#1}}%
{\XXint\scriptscriptstyle\scriptscriptstyle{#1}}%
\!\int}
\def\XXint#1#2#3{{\setbox0=\hbox{$#1{#2#3}{\int}$}
\vcenter{\hbox{$#2#3$}}\kern-.5\wd0}}
\def\mint{\Xint-}
\begin{document}
\author{K. Wildrick}
\address{Mathematisches Institut, Universit\"at Bern, Sidlerstrasse 5, 3012 Bern, Switzerland}
\thanks{K.~W. was supported by Academy of Finland grant 128144, the Swiss National Science Foundation, European Research Council Project CG-DICE, and the European Science Council Project HCAA}
\email{kevin.wildrick@math.unibe.ch}
\author{T. Z\"urcher}
\address{Matematiikan ja tilastotieteen laitos, Jyv\"askyl\"an Yliopisto, PL 35 (MaD)
40014 Jyv\"askyl\"a, Finland}
\thanks{T.~Z. was supported by the Swiss National Science Foundation grant PBBEP3\_130157 and by the Academy of Finland grant number 251650}
\email{thomas.t.zurcher@jyu.fi}

\title[Sharp differentiability results and non-embedding]{Sharp differentiability results for $\lip$ and applications to non-embedding}
\begin{abstract}We give a sharp condition on the lower local Lipschitz constant of a mapping from a metric space supporting a Poincar\'e inequality to a Banach space with the Radon-Nikodym property that guarantees differentiability at almost every point. We apply these results to obtain a non-embedding theorem for a corresponding class of mappings. \end{abstract}
\maketitle

\section{Introduction}
In 1919, Rademacher proved that
Lipschitz mappings between Euclidean spaces are differentiable
almost everywhere, \cite{Rademacher}. As the Lipschitz condition is global, while differentiability is local, Stepanov considered the set
\begin{equation*}
S(f):=\{x\in \field{R}^n:\, \Lip f(x) <\infty\},
\end{equation*}
where
\begin{equation*}
\Lip f(x):=\limsup_{r\to 0} \frac{\sup_{y\in
B(x,r)}\abs{f(y)-f(x)}}{r}
\end{equation*}
and proved the following generalization of Rademacher's theorem: a function
$f\colon \field{R}^n\to \field{R}$ is differentiable almost
everywhere in $S(f)$ \cite{Stepanov}. A second strengthening of Rademacher's theorem states that if $p>n$, then Sobolev functions in $W^{1,p}_{\loc}(\reals^n)$ are
differentiable almost everywhere; this is due to Cesari \cite{Cesari} when $n=2$.  Calder\'on \cite{Calderon} and later Stein \cite{Stein} generalized and sharpened this result: if the weak gradient of a function on $\reals^n$ is in the Lorentz space $L^{n,1}$, then the function is differentiable almost everywhere. Moreover,  $L^{n,1}$ is the largest of the Lorentz spaces $\{L^{n,q}: 1\leq q \leq n\}$ to have
this property. In fact, Calder\'on considered Orlicz spaces; the relation between these spaces and the Lorentz spaces is clarified in \cite{KKM} and \cite{FineBehavior}. Heuristically, a function on $\reals^n$ with a weak gradient in $L^{n,1}$ shares properties with absolutely continuous functions of a single variable.  This principle has been extended to include various higher dimensional notions of absolute continuity \cite{mal_absolutely_1999}, \cite{KKM}, and to apply in more general settings \cite{romanov_absolute_2008}, \cite{Ranjbar}, \cite{SpaceFilling}.

Yet another generalization of Rademacher's theorem was explored by Balogh and Cs\"ornyei, who considered
\begin{equation*}
\lip f(x):=\liminf_{r\to 0} \frac{\sup_{y\in
B(x,r)}\abs{f(y)-f(x)}}{r}
\end{equation*}
instead of $\Lip f$ \cite{BC}. They observed that
Stepanov's theorem with $\Lip f$ replaced by $\lip f$ does not hold
in general, and provided two examples highlighting the obstructions. The first showed that control on the integrability of $\lip f$ is needed, and the second
showed that an upper bound on the size of
the set where $\lip f$ is infinite is required.  On the other hand, they gave the following positive result, which, when combined with the result of Cesari, can be consider to be of ``Stepanov-type".

\begin{theorem}[Balogh-Cs\"ornyei]\label{BC theorem} Let $\Omega \subeq \reals^n$ be a domain, and let $f \colon \Omega \to \reals$ be a continuous function. Assume that $\lip f(x) < \infty$ for $x \in \Omega\bslash E$, where the exceptional set $E$ has $\sigma$-finite $(n-1)$-dimensional Hausdorff measure, and that $\lip f \in \Lloc^p(\Omega)$ for some $1 \leq p \leq \infty$.  Then $f \in W^{1,p}_{\loc}(\Omega)$.
\end{theorem}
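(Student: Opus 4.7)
The natural plan is to verify the ACL (absolutely continuous on lines) characterization of $W^{1,p}_{\loc}(\Omega)$ for continuous functions: such an $f$ lies in $W^{1,p}_{\loc}(\Omega)$ precisely when $f$ is absolutely continuous on $\mathcal{H}^{n-1}$-almost every line parallel to each coordinate axis and its classical partial derivatives are in $L^p_{\loc}$. Since $f$ is already continuous and the pointwise bound $|\partial_i f|\leq \lip f$ holds at every point where $\partial_i f$ exists, the task reduces to establishing absolute continuity along almost every axis-parallel line.

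To descend to one dimension, fix a direction, say $e_1$, and let $V = e_1^\perp$. Writing $E = \bigcup_k E_k$ with $\mathcal{H}^{n-1}(E_k) < \infty$, the Eilenberg slicing inequality
\[
\int^{*}_{V} \mathcal{H}^0(E_k \cap \ell_y)\, d\mathcal{H}^{n-1}(y) \leq c_n\, \mathcal{H}^{n-1}(E_k),
\]
where $\ell_y$ denotes the line through $y \in V$ in direction $e_1$, forces $E_k \cap \ell_y$ to be finite for $\mathcal{H}^{n-1}$-a.e.\ $y$, hence $E \cap \ell_y$ is at most countable for a.e.\ $y$. Classical Fubini applied to $\lip f \in L^p_{\loc}(\Omega)$ ensures in addition that $(\lip f)|_{\ell_y} \in L^p_{\loc}$ for a.e.\ $y$. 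Along each such good line, the restriction $g := f|_{\ell_y}$ is continuous, has $\lip g < \infty$ outside a countable set, and has $\lip g$ locally integrable.

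Everything then hinges on a one-dimensional lemma: if $g : J \to \reals$ is continuous on an open interval and $\lip g$ is locally integrable and finite outside a countable set, then $g$ is locally absolutely continuous, with $|g'| \leq \lip g$ almost everywhere. I would prove this by a Vitali-type covering argument on a compact subinterval $[a,b] \subset J$. At each Lebesgue point $x$ of $\lip g$, choose $r(x)$ small enough that simultaneously the oscillation of $g$ on $B(x,r(x))$ is at most $2(\lip g(x)+\eta)\,r(x)$ and the mean value of $\lip g$ on that ball is at least $\lip g(x)-\eta$; this forces $\operatorname{osc}_{B(x,r(x))} g \leq \int_{B(x,r(x))} \lip g + 4\eta\, r(x)$. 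Using continuity of $g$, cover the countable bad set by open intervals whose total oscillation is at most $\eta$. Passing to a Vitali subcover bounds the total variation of $g$ on $[a,b]$ by $\int_a^b \lip g + O(\eta)$, giving bounded variation. A parallel covering argument applied to an arbitrary null set $N$ produces Lusin's condition~(N), namely $|g(N)|=0$, using absolute continuity of the integral of $\lip g$; together with BV this yields absolute continuity.

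The main technical obstacle is this 1D lemma: the countable exceptional set may be topologically dense in $[a,b]$, so the Vitali step over Lebesgue points of $\lip g$ must be interleaved with the continuity-based oscillation control over the exceptional intervals, and both BV and Lusin~(N) are needed to recover absolute continuity rather than merely bounded variation. Once the lemma is in hand, applying it along $\mathcal{H}^{n-1}$-a.e.\ line in each coordinate direction and reassembling via Fubini yields ACL for $f$, the pointwise bound $|\partial_i f|\leq \lip f$, and hence $\partial_i f \in L^p_{\loc}(\Omega)$, completing the proof via the ACL characterization of the Sobolev space.
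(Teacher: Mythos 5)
First, a structural remark: the paper does not prove this statement --- it is quoted from Balogh--Cs\"ornyei \cite{BC} as background --- so your proposal has to be measured against the argument in that reference. Your global architecture is the right one and matches the known proof in outline: the ACL characterization of $W^{1,p}_{\loc}$ for continuous functions, Eilenberg/Fubini slicing so that $\mathcal{H}^{n-1}$-a.e.\ axis-parallel line meets $E$ in a countable set and carries an integrable restriction of $\lip f$, the pointwise inequality $\lip(f|_{\ell})\leq (\lip f)|_{\ell}$, and the reduction to a one-dimensional absolute-continuity lemma, with $|\partial_i f|\leq \lip f$ a.e.\ supplying the $L^p_{\loc}$ bound at the end.

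The gap is in your proof of the one-dimensional lemma, and it sits exactly where the whole difficulty of the theorem lives. Because $\lip$ is a $\liminf$, at each point you only obtain a \emph{sequence} of good radii, so the controlled intervals form a fine (Vitali) cover, not a full cover. The Vitali covering theorem then produces a disjoint subfamily covering $[a,b]$ only up to a null set, and a disjoint family covering almost everything with small total oscillation does \emph{not} bound the variation of $g$: the uncovered null set can carry all of it (the Cantor function is locally constant off a disjoint family of intervals of full measure, yet has variation $1$). Your sketch leaves two null sets untreated --- the Vitali leftover, and the set of non-Lebesgue points of $\lip g$ at which $\lip g$ is finite --- and attempting to absorb them by a further Vitali extraction only reproduces the same leftover (an infinite regress). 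The repair is to select a controlled interval at \emph{every} point of the compact interval: at Lebesgue points of $\lip g$ an interval $I$ with $\operatorname{osc}_I g \leq \int_I \lip g + \eta|I|$; around the countable exceptional set, intervals of total oscillation at most $\eta$ by continuity; and on the remaining null set, stratified by $\{\lip g\leq k\}$ and enclosed in open sets $U_k$ with $|U_k|\lesssim \eta 2^{-k}/(k+1)$, intervals $I\subseteq U_k$ with $\operatorname{osc}_I g\leq (k+1)|I|$. One then uses compactness together with the one-dimensional fact that any interval cover of $[c,d]$ admits a subcover of multiplicity at most $2$ (take a minimal finite subcover); chaining it yields $|g(d)-g(c)|\leq 2\int_c^d \lip g + O(\eta)$ with errors that localize additively, whence BV, and the same multiplicity-$2$ device (not Vitali) is what makes the Lusin condition (N) estimate close, giving absolute continuity via Banach--Zarecki. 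Without replacing ``pass to a Vitali subcover'' by a covering mechanism that reaches every point, the argument does not close.
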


In the foundational work \cite{Cheeger99}, Cheeger generalized Rademacher's theorem to the setting of Lipschitz functions on metric measure spaces that support a Poincar\'e inequality using the notion of a \emph{measurable differentiable structure}. Other generalizations in this setting followed \cite{BRZ07}, \cite{Z}, \cite{RNP}.


The first result of this paper sharpens the theorem of Balogh and Cs\"ornyei to the Lorentz scale, in the spirit of Stein. Our result is in the setting of mappings between metric measure spaces, although it is new even for functions on Euclidean space. The assumptions are standard for this setting, and can be heuristically understood to mean that the domain space $(X,d,\mu)$ is of dimension $Q$ and contains a large enough collection of rectifiable curves to support ``first order calculus". For more details on these definitions, as well as the usual generalization of $\Lip$ and $\lip$ to the setting of mappings between metric spaces, see Section~\ref{notation} below. 

\begin{theorem}\label{main} Let $Q \geq 1$ and $1 \leq q \leq Q$.  Let $(X,d,\mu)$ be a complete and Ahlfors $Q$-regular metric space that supports a $q$-Poincar\'e inequality, and let $Y$ be any metric space. Consider a continuous mapping $f \colon X \to Y$, and set 
\begin{equation*}
E =\{x \in X: \lip f(x)=\infty\}.
\end{equation*}
If $\lip f \in L^{Q,1}(X)$ and either
\begin{itemize}
\item $q=1$ and $E$ has $\sigma$\nobreakdash-finite $(Q-1)$\nobreakdash-dimensional Hausdorff measure, or
\item $q>1$ and $E$ has Hausdorff dimension at most $(Q-q)$,
\end{itemize}
then $f$ has an upper gradient in the Lorentz space $L^{Q,1}(X).$
\end{theorem}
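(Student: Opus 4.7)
The plan is to show that $g := \lip f$ itself serves as an upper gradient of $f$ (after a possible modification on a $\mu$-null set), which since $g \in L^{Q,1}(X)$ by hypothesis immediately yields the conclusion. Note first that $\lip f \in L^{Q,1}(X) \subseteq L^Q_{\loc}(X)$ forces $\mu(E) = 0$, so $g$ is well-defined as a measurable function on $X$. It then suffices to show that $g$ is a $q$-weak upper gradient of $f$, i.e. that the upper-gradient inequality
\[ d_Y(f(\gamma(a)),f(\gamma(b))) \leq \int_\gamma g\,ds \]
holds for every rectifiable curve $\gamma$ outside a family of $q$-modulus zero. The argument then splits into two pieces: a pointwise chaining estimate valid on curves disjoint from $E$, and a modulus-zero statement for the family of curves that meet $E$.

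For the first piece, fix a rectifiable curve $\gamma$ disjoint from $E$. At each $x \in \gamma$, the finiteness of $\lip f(x)$ provides infinitesimal oscillation control, which can be combined with the $q$-Poincar\'e inequality and a telescoping chain of dyadic balls along $\gamma$ (of the type used by Heinonen-Koskela) to yield a pointwise bound
\[ d_Y(f(x),f(y)) \leq C\, d(x,y)\, M_q(\lip f\,\chi_U)(x) \]
for a localized $q$-maximal function $M_q$ and a neighborhood $U$ of the image of $\gamma$. The role of the endpoint Lorentz hypothesis is exactly to make this maximal majorant tame: via the sharp mapping properties of $M_q$ on $L^{Q,1}(X)$ (the metric-measure analogue of Stein's endpoint embedding), $M_q(\lip f\,\chi_U)$ is finite $\mu$-a.e. and integrable along $\mu$-a.e.\ curve, and the discrete chaining estimate can then be upgraded to a genuine line integral of $\lip f$ along $\gamma$.

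For the second piece, one must show that the family $\Gamma_E$ of rectifiable curves meeting $E$ has $q$-modulus zero. When $q > 1$, this follows from the standard removability theorem for $q$-modulus in $Q$-regular spaces, under which sets of Hausdorff dimension at most $Q-q$ are $q$-modulus-negligible; the argument uses a Vitali-type cover of $E$ by balls of radius $r_i$ and the admissible density $\sum_i r_i^{-1} \chi_{B(x_i,2r_i)}$. The endpoint case $q=1$ is more delicate and uses instead a Fubini/coarea-type argument connecting the $1$-modulus of curves to transversal $(Q-1)$-Hausdorff content, which is finite by the $\sigma$-finiteness hypothesis. Combining the two pieces, $g = \lip f$ is a $q$-weak upper gradient of $f$ in $L^{Q,1}(X)$, and a standard modification on a $\mu$-null set produces a genuine upper gradient in the same class.

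The main obstacle is the endpoint chaining estimate. In classical Sobolev-type arguments, one assumes $\lip f \in L^p$ for some $p > Q$ so that a Morrey-type inequality and strong $L^p$-boundedness of the maximal operator combine to give continuity and differentiability directly. At the critical exponent $p=Q$ these arguments degenerate, and one is forced to use the Lorentz-space refinement $L^{Q,1}$ together with its sharp action on maximal functions in the spirit of Calder\'on and Stein. Transplanting this sharp endpoint analysis into the metric-measure setting, while simultaneously tracking the geometry of curves that approach (but must avoid) the exceptional set $E$, is where the most delicate estimates will be needed.
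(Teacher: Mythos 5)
There are genuine gaps. The most serious is your second piece: the family $\Gamma_E$ of \emph{all} rectifiable curves meeting $E$ does \emph{not} have $q$-modulus zero under the stated hypotheses. Take $q=1$, $X=\reals^n$, and $E$ a hyperplane, which has locally finite (hence $\sigma$-finite) $(n-1)$-dimensional Hausdorff measure; for any admissible $\rho$ for $\Gamma_E$, integrating the constraint $\int_\gamma \rho\,ds\geq 1$ over the family of short segments perpendicular to $E$ and applying Fubini gives $\int \rho\,dx \gtrsim \mathcal{H}^{n-1}(\text{patch})>0$, so the $1$-modulus of $\Gamma_E$ is positive. What is true (and trivially so, since $\mu(E)=0$) is that curves spending \emph{positive length} in $E$ form a negligible family; the curves that merely touch $E$ in a length-zero set cannot be discarded, and controlling the oscillation of $f$ across those touching points is exactly where the codimension hypothesis on $E$ does its work. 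This is the technical core of the result the paper quotes (the extension of Lemma~3.9 of \cite{Z}, in the spirit of Balogh--Cs\"ornyei), and your decomposition bypasses it with a false claim. Relatedly, your first piece would need the upper gradient inequality for \emph{every} curve disjoint from $E$, which is stronger than what these arguments yield: the conclusion is only that $\lip f$ is a $q$-\emph{weak} upper gradient. You also never address the case $q=Q$ (nor the passage from ``$\dim_{\mathcal H} E\leq Q-q$'' to a usable $\sigma$-finiteness condition); the paper needs the Keith--Zhong self-improvement to some $q'<q$ here, both for the exceptional set and because $M_Q$ is not bounded on $L^{Q,1}$.

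The final step is also not right as stated: passing from a $q$-weak upper gradient in $L^{Q,1}$ to a genuine upper gradient in $L^{Q,1}$ is not a ``modification on a $\mu$-null set.'' The standard device adds an admissible function for the exceptional curve family with small $L^q$-norm, and when $q<Q$ there is no reason this correction is small (or even finite) in $L^{Q,1}$; the paper invokes that device only when $Q=q=1$, where the exponents match. For $Q>1$ the paper takes a different route that you partially anticipate but assemble in the wrong order: \emph{after} knowing $\lip f$ is a $q$-weak upper gradient, the $q$-Poincar\'e inequality yields a Haj\l asz-type pointwise bound $d_Y(f(x),f(y))\leq C d(x,y)(M_q(\lip f)(x)+M_q(\lip f)(y))$ off a null set $N$; one then defines $g=CM_q(\lip f)$ off $N$ and $g=+\infty$ on $N$, and a chaining argument over subarcs (choosing on each subarc a point where $g$ does not exceed its average, which forces that point outside $N$ whenever $\int_\gamma g<\infty$) shows $4g$ is a genuine upper gradient for \emph{every} rectifiable curve. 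The upper gradient produced is thus $4CM_q(\lip f)\in L^{Q,1}$ (using boundedness of $M_q$ on $L^{Q,1}$ for $q<Q$), not $\lip f$ itself.
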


As in the Euclidean setting, the assumption that an upper gradient of the mapping $f$ is in the Lorentz space $L^{Q,1}(X)$ implies that $f$ shares many properties with an absolutely continuous function on $\reals$; see Theorem~6.3, Corollary~6.7, and Lemma~6.8 in \cite{SpaceFilling}. We record some of these properties in the following statement. 

\begin{corollary}\label{condition n} Assume the notation and hypotheses of Theorem \ref{main}. If $N \subeq X$ is a set of zero $Q$-dimensional Hausdorff measure, then so is $f(N)$. Moreover, the image $f(X)$ is the countable union of Lipschitz images of $X$ and a set of zero $Q$\nobreakdash-dimensional Hausdorff measure. In particular, if $X = \reals^n$, then $f(X)$ is a countably $n$-rectifiable subset of $Y$.
\end{corollary}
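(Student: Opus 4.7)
The plan is to reduce Corollary~\ref{condition n} to the three results from \cite{SpaceFilling} cited in the statement. First, I would apply Theorem~\ref{main} to $f$, yielding an upper gradient $g$ of $f$ lying in $L^{Q,1}(X)$; this places $f$ squarely within the framework studied in Section~6 of \cite{SpaceFilling}, so all of the ``absolute-continuity type" properties developed there are available to us.

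The first assertion, the Luzin $(N)$-property with respect to $\haus^Q$, then follows verbatim from Theorem~6.3 of \cite{SpaceFilling}. For the decomposition, my plan is to use Corollary~6.7 of \cite{SpaceFilling} to extract a countable family of measurable subsets $\{X_k\}_{k \in \nats} \subeq X$ together with an exceptional set $N_0$ satisfying $\mu(N_0)=0$, $X = N_0 \cup \bigcup_{k \in \nats} X_k$, and such that each restriction $f|_{X_k}$ is Lipschitz; Lemma~6.8 of \cite{SpaceFilling} will then extend each such restriction to a Lipschitz mapping $\varphi_k \colon X \to Y$. Ahlfors $Q$-regularity gives $\mu \asymp \haus^Q$ on bounded sets, so $\haus^Q(N_0)=0$ as well, and the Luzin $(N)$-property just established yields $\haus^Q(f(N_0))=0$. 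Assembling the pieces,
\begin{equation*}
f(X) \,=\, f(N_0)\, \cup\, \bigcup_{k \in \nats} \varphi_k(X),
\end{equation*}
which is the stated decomposition. When $X=\reals^n$ we have $Q=n$, so this is precisely the definition of countable $n$-rectifiability of $f(X)$ in $Y$.

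The argument is largely a bookkeeping exercise combining quoted results, so I do not expect a substantive obstacle. The only point I would verify carefully is whether Lemma~6.8 of \cite{SpaceFilling} actually produces Lipschitz extensions defined on all of $X$ with values in the metric space $Y$ itself: since $Y$ is arbitrary, classical McShane extensions do not directly apply, and one may need to factor through a Kuratowski embedding $Y \into \elinf(Y)$ before reinterpreting the image. Even in the weaker scenario where only $f|_{X_k}$ is Lipschitz without extension to $X$, the same decomposition yields the Euclidean rectifiability conclusion, since countable $n$-rectifiability requires only Lipschitz images of subsets of $\reals^n$.
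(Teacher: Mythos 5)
Your proposal matches the paper's own treatment: the authors give no separate proof of Corollary~\ref{condition n} beyond applying Theorem~\ref{main} to obtain an upper gradient in $L^{Q,1}(X)$ and then invoking exactly the three results you cite (Theorem~6.3 for the Luzin condition, Corollary~6.7 and Lemma~6.8 for the Lipschitz decomposition) from \cite{SpaceFilling}. Your closing caveat about extending Lipschitz restrictions to all of $X$ for a general metric target is a sensible point to flag, and your observation that the rectifiability conclusion survives even without full extensions is correct.
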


Hanson has recently given an example showing the sharpness of Theorem \ref{main} with respect to the size of the exceptional set \cite[Theorem~2.3]{Hanson}.  Here we provide an example that shows sharpness with respect to the integrability of $\lip$. Our construction draws on ideas of Hanson, Mal\'y \cite{maly}, and our previous work on the capacity of points \cite{SpaceFilling}. It is substantially different from the corresponding example in \cite{BC}.

\begin{theorem}\label{example1} Let $n \geq 2$ be an integer, and let $\mathcal{S}$ be a rearrangement invariant Banach function space containing a compactly supported function $g \notin L^{n,1}(\reals^n)$.  Then there exists a compactly supported continuous function $f \colon \reals^n \to \reals$ with the following properties:
\begin{itemize}
\item $\lip f(x) < \infty$ for all $x \in \reals^n$,
\item $\lip f(x) \in \mathcal{S}$, 
\item the set of points at which $f$ fails to be differentiable has positive $n$-dimensional Hausdorff measure.
\end{itemize}
\end{theorem}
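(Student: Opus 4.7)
The plan is to construct $f$ as a superposition of disjointly supported Lipschitz tent functions placed in the gaps of a Cantor-type set $K\subseteq[0,1]^n$ of positive Lebesgue measure, following the paradigm of Hanson~\cite{Hanson} and Mal\'y~\cite{maly} but tuning the bump heights and widths to the decreasing rearrangement $g^{*}$ so as to accommodate an arbitrary rearrangement invariant space $\mathcal{S}\supsetneq L^{n,1}(\reals^n)$.

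\emph{Step 1 (Normalization).} Rearrangement invariance of $\mathcal{S}$ lets us replace $g$ by $g^{*}\chi_{B(0,1)}$ and assume $g$ is radial and radially decreasing. A bounded compactly supported function lies in $L^{n,1}$, so $g\notin L^{n,1}$ forces $g^{*}(t)\to\infty$ as $t\to 0^{+}$. Discretizing at dyadic distributional masses $\lambda_{m}:=2^{-mn}$ and setting $a_{m}:=g^{*}(\lambda_{m})$, we obtain $a_{m}\to\infty$ and the divergence $\sum_{m}a_{m}2^{-m}=\infty$.

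\emph{Step 2 (Construction).} Choose branching factors $L_{m}\to\infty$ and shrinkage parameters $\eta_{m}\in(0,1)$ with $\sum_m\eta_{m}<\infty$, and build $K=\bigcap_{m}K_{m}\subseteq[0,1]^{n}$ by subdividing each generation-$(m-1)$ cube of side $s_{m-1}$ into $L_{m}^{n}$ generation-$m$ cubes of side $s_{m}:=(1-\eta_{m})s_{m-1}/L_{m}$; then $|K|>0$. In each gap of $K_{m-1}\setminus K_{m}$ (of width $\eta_{m}s_{m-1}/L_{m}$) place a single Lipschitz tent $\phi_{m,j}$ of width $r_{m}\asymp\eta_{m}s_{m-1}/L_{m}$ and height $h_{m}:=a_{m}r_{m}$. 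Tune $(L_{m},\eta_{m})$ so that simultaneously $N_{m}r_{m}^{n}\asymp\lambda_{m}$ (distributional matching, where $N_m:=\prod_{k\leq m}L_k^n$), $a_{m}\eta_{m}\to\infty$, and $\sum_{m'\geq m}h_{m'}\lesssim s_{m-1}$ for every $m$. Set $f:=\sum_{m,j}\phi_{m,j}$; this is a continuous, compactly supported function whose summands have pairwise disjoint supports.

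\emph{Step 3 (Verification).} \emph{(i)} Since $\lip f\asymp a_{m}$ on $\supp\phi_{m,j}$ and $\lip f=0$ elsewhere, the matching $N_{m}r_{m}^{n}\asymp\lambda_{m}$ makes the decreasing rearrangement $(\lip f)^{*}$ pointwise dominated by a constant multiple of $g^{*}$; hence $\lip f\in\mathcal{S}$. \emph{(ii)} For each $x\in K$, the test scale $r\asymp s_{m-1}$ lies between consecutive bump scales, so $B(x,r)$ meets only bumps of generation $\geq m$ and the quiet-scale bound yields $\sup_{B(x,r)}|f(y)-f(x)|/r\lesssim 1$; hence $\lip f(x)<\infty$. \emph{(iii)} Every $x\in K$ lies within distance $\leq\sqrt{n}\,s_{m}$ of the generation-$m$ bump adjacent to its $K_{m}$-cube, yielding a point $y_{m}$ with $|f(y_{m})-f(x)|/|y_{m}-x|\gtrsim h_{m}/s_{m}\asymp a_{m}\eta_{m}\to\infty$, so $f$ fails to be differentiable at $x$. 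Since $|K|>0$, the non-differentiability set has positive $n$-dimensional Hausdorff measure.

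\emph{Main obstacle.} Step 2 is the genuine difficulty: the parameters $L_{m},\eta_{m},r_{m}$ must realize the distributional matching, the quiet-scale tail bound, the positivity $|K|>0$, and the blow-up $a_{m}\eta_{m}\to\infty$ all at once. These constraints couple through $s_m$ and become most delicate when $(a_m)$ grows only just fast enough to make $\sum_m a_m 2^{-m}$ diverge; accommodating this case requires aggressive super-dyadic branching $L_m\to\infty$ together with careful tail allocation. The needed dyadic bookkeeping is essentially the same one underpinning the sharp capacity-of-points analysis in~\cite{SpaceFilling}. Once Step 2 is carried out, Step 3 reduces to elementary geometric estimates on the tent functions.
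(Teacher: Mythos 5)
There is a genuine gap, and it sits exactly where you locate the "main obstacle": your Step 2 cannot be carried out, because the bumps you use are the wrong shape. A Lipschitz tent of width $r$ and height $h$ has gradient $\asymp h/r$ on a set of measure $\asymp r^{n}$, so for \emph{any} rearrangement invariant space whose fundamental function is comparable to that of $L^{n,1}$ — in particular for the paper's flagship example $\mathcal{S}=L^{n,q}$, $1<q\leq n$, where $\lVert a\chi_E\rVert_{n,q}\asymp a\lvert E\rvert^{1/n}$ — the $\mathcal{S}$-norm of the gradient of a single tent is comparable to its height, exactly as in $L^{n,1}$. Concretely: your matching $N_m r_m^{n}\asymp\lambda_m$ together with $N_m s_m^{n}\asymp 1$ and $r_m\asymp\eta_m s_m$ forces $\eta_m\asymp\lambda_m^{1/n}$, so the difference quotient you need to blow up satisfies $a_m\eta_m\asymp g^{*}(\lambda_m)\,\lambda_m^{1/n}\leq\lVert g\rVert_{n,\infty}<\infty$ whenever $g\in L^{n,q}$ with $q<\infty$. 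Divergence of $\sum_m 2^{-m}a_m$ (which is what $g\notin L^{n,1}$ gives you) does \emph{not} imply $2^{-m}a_m\to\infty$; e.g.\ $g^{*}(t)=t^{-1/n}(\log(e/t))^{-1}$ lies in $L^{n,q}\setminus L^{n,1}$ for $q>1$ and has $t^{1/n}g^{*}(t)$ bounded. Conversely, if you abandon the matching and force $a_m\eta_m\to\infty$, then $\lVert\lip f\rVert_{n,\infty}\gtrsim\sup_m a_m(N_mr_m^{n})^{1/n}=\infty$, so $\lip f\notin\mathcal{S}$. Either way the construction dies; no choice of $L_m,\eta_m,r_m$ resolves this, because the obstruction is the profile of the bump, not the bookkeeping.

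The missing idea is the capacity-of-a-point mechanism that you cite but do not actually use (Proposition 6.1 of the paper, proved from scratch there): since $\int_0 t^{1/n-1}g^{*}(t)\,dt=\infty$, one can build a radial Lipschitz bump supported in $B(a,\ep)$ whose gradient profile is a small multiple $\lambda\, g^{*}(\Omega_N\lvert x-a\rvert^{n})$ of the rearrangement of $g$ on a thin annulus, and which nevertheless reaches a \emph{prescribed} height $\tau$ near $a$ with $\lVert\Lip\phi\rVert_{\mathcal{S}}\leq\ep$, because the normalizing constant $\lambda=(u(\Omega_N\del^{n})-u(\Omega_N(\ep/2)^{n}))^{-1}$ tends to $0$ as $\del\to0$. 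This decouples height from $\mathcal{S}$-norm, which is precisely what a constant-slope tent cannot do. With such bumps placed in concentric subcubes $I_Q$ of the surviving cubes (not in the gaps), heights $2^{-k_n}$ hugely exceeding what the cube's scale $2^{-j_n}$ would permit for a function with small gradient norm, and norms $\ep_n$ chosen so that $(\operatorname{card}\mathcal{Q}_n)\ep_n\leq2^{-n}$, the three conclusions follow. A second, smaller issue: your Step 3(ii) asserts that $B(x,s_{m-1})$ "meets only bumps of generation $\geq m$", but a point of $K$ on the face of its generation-$(m-1)$ cube can be arbitrarily close to a same-generation gap bump, so the quiet scale is not automatic; the paper engineers it with a third scale parameter $l_n$ creating a buffer annulus of width $2^{-l_n}$ around each $I_Q$ and around each child cube, so that every $x\in\mathcal{I}$ is at distance $\geq2^{-l_n}$ from all bumps of generation $\leq n$ while later bumps have height $\leq2^{-k_{n+1}}=o(2^{-l_n})$. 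Your outline would need an analogous separation to make $\lip f<\infty$ on $K$.
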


A reader unfamiliar with the notion of a rearrangement invariant Banach function space may consult \cite{BS}. As an example, we point out that for any $1<q\leq n$, the Lorentz space $L^{n,q}(\reals^n)$ satisfies the hypotheses of the above theorem.

In the case that the target of the mapping in Theorem \ref{main} is a Banach space with the Radon-Nikodym property, we provide in Theorem \ref{Stepanov} below a differentiation result in the spirit of Stein. Here differentiability is meant in the sense of Cheeger, i.e., in terms of a measurable differentiable structure $\{(X_\alpha,\varphi_\alpha)\}$ on a metric measure space $(X,d,\mu)$. Each \emph{chart} $(X_\alpha,\varphi_\alpha)$ consists of a measurable subset $X_{\alpha}$ and a Lipschitz function $\varphi_{\alpha} \colon X \to \reals^{N(\alpha)}$, where $N(\alpha) \in \nats$ is called the \emph{dimension} of the chart. The collection of charts is assumed to be countable and to cover $X$ upto a set of $\mu$-measure $0$. Roughly speaking, the differentiability of a mapping from $f \colon X \to V$, where $V$ is a Banach space, with respect to the chart $(X_\alpha, \varphi_\alpha)$ means existence of a measurable function $g \colon X_\alpha \to V$ so that 
$\left\langle \varphi, g \right\rangle$ approximates $f$ to first order almost everywhere on $X_\alpha$. Built-in to the definition of a measurable differentiable structure is a Rademacher theorem: every real-valued Lipschitz function is differentiable with respect to each chart. It is a deep result of Cheeger that such a measurable differentiable structure exists whenever $(X,d,\mu)$ is doubling and supports a Poincar\'e inequality \cite{Cheeger99}; one could call this result the Cheeger-Rademacher Theorem.  This was extended by Cheeger and Kleiner in \cite{RNP} to include the differentiability of  Lipschitz mappings to any Banach space that has the Radon-Nikodym property. See Section \ref{diff Section} below for more details and definitions.

\begin{theorem}\label{Stein}  Let $Q \geq 1$.  Let $(X,d,\mu)$ be a complete and Ahlfors $Q$-regular metric space that supports a $Q$-Poincar\'e inequality, and let $V$ be a Banach space with the Radon-Nikodym property. Let $f \colon X \to V$ be a continuous mapping with an upper gradient in the Lorentz space $L^{Q,1}(X)$.  Then $f$ is almost everywhere differentiable with respect to any measurable differentiable structure on $X$.
\end{theorem}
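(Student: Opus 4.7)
The plan is to reduce to the Cheeger--Kleiner theorem \cite{RNP} via a Lipschitz truncation of $f$, and then transfer the resulting differentials back to $f$ itself using the fine Lebesgue-point behavior of $L^{Q,1}$-upper gradients established in \cite{SpaceFilling}.

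First, let $g \in L^{Q,1}(X)$ be an upper gradient of $f$. For $k \in \nats$, set $F_k := \{x \in X : Mg(x) \leq k\}$, where $M$ denotes the centered Hardy--Littlewood maximal operator. Since $L^{Q,1}(X) \subset L^Q(X)$ and Ahlfors regularity makes $M$ bounded on $L^Q$, we have $\mu(X \setminus F_k) \to 0$. The $Q$-Poincar\'e inequality, in its pointwise Haj\l asz--Koskela form, yields that $f|_{F_k}$ is Lipschitz with constant at most $Ck$, where $C$ depends only on the structural constants of $X$. Because $V$ is Banach, we may extend $f|_{F_k}$ to a Lipschitz mapping $\tilde f_k \colon X \to V$ (allowing a further, possibly $k$-dependent, loss in the Lipschitz constant). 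By \cite{RNP}, each $\tilde f_k$ is Cheeger differentiable at $\mu$-a.e.\ point with respect to the given measurable differentiable structure.

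We then claim that at $\mu$-a.e.\ density point $x_0$ of $F_k$ at which $\tilde f_k$ is Cheeger differentiable, the mapping $f$ is Cheeger differentiable at $x_0$ with the same differential. For $y \in F_k$ this is immediate from $f(y) = \tilde f_k(y)$. For $y \in B(x_0, r) \setminus F_k$ we estimate $\|f(y) - \tilde f_k(y)\|$ by the oscillations of $f$ and $\tilde f_k$ on $B(x_0, r)$; using that $g$ is an upper gradient of $f$, the Poincar\'e inequality controls these oscillations in terms of an integrated quantity of $g$ over a slightly enlarged ball. The sharp Lebesgue-point behavior of $L^{Q,1}$-functions established in Theorem~6.3, Corollary~6.7, and Lemma~6.8 of \cite{SpaceFilling} then ensures that this quantity is $o(r)$ at $\mu$-a.e.\ $x_0$, giving the required first-order estimate uniformly across the defect set. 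Since $\bigcup_k F_k$ exhausts $X$ up to a null set, $f$ is Cheeger differentiable $\mu$-a.e.

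The main obstacle lies in this transfer step: Cheeger differentiability demands a $o(r)$-rate estimate uniformly over the full neighborhood $B(x_0, r)$, including the defect set $B(x_0, r) \setminus F_k$ where $f$ and $\tilde f_k$ may differ. The Lorentz endpoint condition $L^{Q,1}$ is essential here---as Theorem~\ref{example1} shows, merely $L^Q$-integrability would not suffice---so carefully exploiting the machinery of \cite{SpaceFilling} to extract $o(r)$ rather than $O(r)$ behavior from $L^{Q,1}$-regular upper gradients is the crux of the argument.
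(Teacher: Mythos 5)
Your overall architecture---truncate to Lipschitz pieces, apply Cheeger--Kleiner, transfer the differentials back---matches the paper's, but the transfer step, exactly the one you flag as the crux, does not work as you describe it. For $y\in B(x_0,r)\setminus F_k$ you need $\|f(y)-\tilde{f}_k(y)\|_V=o(r)$, and you propose to extract this from the oscillation of $f$ on $B(x_0,r)$ via the Poincar\'e inequality and the Lebesgue-point behavior of $L^{Q,1}$ upper gradients. But the oscillation of $f$ on $B(x_0,r)$ being $o(r)$ at $\mu$-a.e.\ $x_0$ is equivalent to $\Lip f=0$ a.e., which is false in general (take $f$ bi-Lipschitz); what \cite{SpaceFilling} actually yields, via the Rado--Reichelderfer bound $\diam f(B)^Q\le\int_{\sigma B}\Theta$ and Lebesgue differentiation of $\Theta$, is the $O(r)$ statement $\Lip f(x_0)<\infty$ a.e., and an $O(r)$ error is useless for a first-order estimate. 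Moreover the Poincar\'e inequality controls only the \emph{averaged} oscillation $\mint_B\|f-f_B\|$ and cannot see the value of $f$ at the single bad point $y$, while the Haj\l asz pointwise inequality $\|f(x)-f(y)\|\le Cd(x,y)(M_Qg(x)+M_Qg(y))$ gives nothing when one of the two points lies outside $F_k$.

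What is missing is a density-point argument combined with a pointwise bound emanating from good points: pick $x_m\in F_k$ with $d(y,x_m)\le\ep\, d(y,x_0)$ (possible since $x_0$ is a density point of $F_k$), write $\|f(y)-\tilde{f}_k(y)\|\le\|f(y)-f(x_m)\|+L_k\,d(x_m,y)$, and then you still must prove a one-sided Lipschitz bound $\|f(y)-f(x_m)\|\lesssim_k d(y,x_m)$ from the good point $x_m$ toward an \emph{arbitrary} nearby $y$. In your setup this requires augmenting the definition of $F_k$ by a bound on the maximal function of the weight $\Theta$ and using $\|f(y)-f(x_m)\|\le\diam f(B(x_m,d(x_m,y)))\le(\int_{\sigma B}\Theta)^{1/Q}\lesssim k^{1/Q}d(x_m,y)$; as written, your proposal contains no mechanism for controlling $f$ at points of the defect set. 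The paper sidesteps this by first deducing $\Lip f<\infty$ a.e.\ from \cite{SpaceFilling} and then running the classical Stepanov decomposition into the sets $A_k=\{x:\|f(x)-f(y)\|\le k\,d(x,y)\ \text{for all}\ y\in B(x,1/k)\}$, whose very definition supplies the needed one-sided bound. A secondary point: your extension step is justified by the wrong reason---Lipschitz maps from subsets of a metric space into a general Banach space need not extend; here one must invoke the Lang--Schlichenmaier theorem, which applies because $X$ is doubling, not because $V$ is Banach.
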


Theorem \ref{Stein}, when combined with Theorem \ref{main}, provides a ``Stepanov-Stein" result for $\lip$. 

As discussed in \cite{RNP}, differentiability theorems in the setting of metric measure spaces lead to corresponding non-embedding theorems. In certain cases, these non-embedding results have important applications in theoretical computer science \cite{Compression}.  

The differentiability of Lipschitz mappings into a Banach space with the Radon-Nikodym property leads to a bi-Lipschitz non-embedding theorem. Our differentiation result Theorem \ref{Stein} leads to a non-embedding theorem for a larger class of mappings. 

For a mapping $f\colon X \to Y$ of metric spaces, we denote
$$\mathcal{S}_{f} = \{x \in X: \Lip f(x) < \infty\}.$$
We say that $f$ is an \emph{embedding} if it is a homeomorphism of $X$ onto $f(X)$.

\begin{theorem}\label{RNP non}  Let $Q \geq 1$.  Let $(X,d,\mu)$ be a complete and Ahlfors $Q$-regular metric space that supports a $Q$-Poincar\'e inequality, and let $V$ be a Banach space with the Radon-Nikodym property. Let $\iota \colon X \to V$ be an embedding with an upper gradient in the Lorentz space $\eL^{Q,1}(X)$. If 
\begin{itemize}
\item[(i)]$\Hdim^Q_V(\iota(X) \bslash \mathcal{S}_{\iota\inv}) = 0$,
\item[(ii)]$\Hdim^Q_X(X \bslash \iota\inv(\mathcal{S}_{\iota\inv}))=0$, 
\end{itemize}
then for any coordinate chart $(X_\alpha,\varphi_\alpha)$ of any measurable differentiable structure on $X$, it holds that
$Q \leq N(\alpha).$
\end{theorem}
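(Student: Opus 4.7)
The plan is to apply Theorem~\ref{Stein} to the embedding $\iota$, combine the resulting differentiability with the pointwise Lipschitz estimate on $\iota\inv$ supplied by hypothesis~(ii), and deduce that the chart map $\varphi_\alpha$ is bi-Lipschitz on a metric ball of positive radius in $X$. A dimension count using Ahlfors $Q$-regularity will then force $N(\alpha) \geq Q$.

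Since $\iota$ is continuous and has an upper gradient in $L^{Q,1}(X)$, and $V$ has the Radon-Nikodym property, Theorem~\ref{Stein} provides differentiability $\mu$-almost everywhere with respect to the chart $(X_\alpha,\varphi_\alpha)$: at $\mu$-a.e.\ $x \in X_\alpha$ there is a bounded linear map $L_x \colon \reals^{N(\alpha)} \to V$ such that
$$\iota(y) - \iota(x) = L_x(\varphi_\alpha(y) - \varphi_\alpha(x)) + o(d(x,y))$$
as $y \to x$. By hypothesis~(ii), $\iota\inv(\mathcal{S}_{\iota\inv})$ has full $\mu$-measure in $X$, so I fix a point $x_0 \in X_\alpha$ at which $\iota$ is differentiable and $C := \Lip(\iota\inv)(\iota(x_0))$ is finite. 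Unwrapping the definition of $\Lip$ and using continuity of $\iota$, I obtain a radius $\delta_0 > 0$ such that $d(x_0, y) \leq (C+1)\,\|\iota(y) - \iota(x_0)\|_V$ for every $y \in B(x_0, \delta_0)$.

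Substituting the derivative approximation into this inverse estimate yields
$$d(x_0, y) \leq (C+1)\,\|L_{x_0}\|\cdot\|\varphi_\alpha(y) - \varphi_\alpha(x_0)\| + o(d(x_0,y))$$
as $y \to x_0$. In particular $L_{x_0}$ cannot vanish, for otherwise this inequality would force $d(x_0, y) = 0$ for all sufficiently close $y$, contradicting Ahlfors regularity. After absorbing the error term I obtain a ball $B(x_0,\delta)$ on which $\varphi_\alpha$ is bi-Lipschitz onto its image in $\reals^{N(\alpha)}$; the Lipschitz upper bound is built into the definition of the chart. Ahlfors $Q$-regularity gives $\Hdim^Q(B(x_0,\delta)) > 0$, and bi-Lipschitz invariance of Hausdorff measure propagates this to $\Hdim^Q(\varphi_\alpha(B(x_0,\delta))) > 0$ inside $\reals^{N(\alpha)}$. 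Since $\Hdim^Q(\reals^m) = 0$ whenever $m < Q$, I conclude $N(\alpha) \geq Q$.

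The main technical point is the middle step, namely extracting a genuine bi-Lipschitz estimate for $\varphi_\alpha$ from two pieces of infinitesimal data; the finiteness of $\Lip(\iota\inv)(\iota(x_0))$ is precisely what excludes the degenerate case $L_{x_0} = 0$ and keeps the lower bound nontrivial. Hypothesis~(i) does not appear strictly necessary for this particular sketch, though it presumably enters the authors' argument either to furnish a symmetric absolute continuity statement on the image side compatible with Corollary~\ref{condition n}, or to streamline the covering reduction on $\iota(X)$.
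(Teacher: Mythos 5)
Your route is genuinely different from the paper's: the paper decomposes $X$ (up to a $\Hdim^Q$-null set) into closed pieces $C_{k,l}$ on which $\iota$ is bi-Lipschitz, picks a point that is simultaneously a density point of such a piece and a point of differentiability, blows up to a weak tangent $C_{x_0}$, shows this tangent is Ahlfors $Q$-regular and isometrically embeds into the span $F_{x_0}$ of the differentials of $\iota$ (Proposition~\ref{dim bound}, Corollary~\ref{embedding to fd}), and reads off $Q\leq N(\alpha)$ from $\dim F_{x_0}\leq N(\alpha)$. You avoid weak tangents entirely and try to extract the dimension bound from the chart map $\varphi_\alpha$ itself. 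That is a legitimate and potentially more elementary strategy, but as written it has a genuine gap at precisely the step you yourself flag as the main technical point.

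The gap is this: differentiability of $\iota$ at $x_0$ combined with $\Lip(\iota\inv)(\iota(x_0))<\infty$ yields only the \emph{one-point} estimate $d(x_0,y)\leq C'\lVert\varphi_\alpha(y)-\varphi_\alpha(x_0)\rVert$ for $y\in X_\alpha$ near $x_0$ (note also that the Cheeger differential controls $\iota(y)-\iota(x_0)$ only along $y\in X_\alpha$, and $X_\alpha$ is merely measurable, so you never obtain an estimate on a full metric ball). This pins one foot of every pair at $x_0$; it is not the bi-Lipschitz estimate $d(y,z)\leq C'\lVert\varphi_\alpha(y)-\varphi_\alpha(z)\rVert$ for \emph{all} pairs $y,z$ in a set, which is what you need in order to transfer positive $\Hdim^Q$-measure into $\reals^{N(\alpha)}$; a map can be noncollapsing relative to a single center while still crushing nearby pairs together. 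The repair is standard but must be carried out: run your estimate at $\mu$-a.e.\ $x\in X_\alpha$ to get measurable $C(x)$ and $\delta(x)$, set $E_m=\{x\in X_\alpha : C(x)\leq m,\ \delta(x)\geq 1/m\}$, choose $m$ with $\mu(E_m)>0$, and note that for $y,z\in E_m$ with $d(y,z)<1/m$ the estimate based at $y$ applies to the pair $(y,z)$, so $\varphi_\alpha$ is bi-Lipschitz on small-diameter positive-measure pieces of $E_m$; Ahlfors regularity then completes your dimension count. With that insertion the argument goes through. Your side remark on hypothesis (i) is fair: the paper's written proof likewise draws the needed covering statement from (ii) together with $\Hdim^Q_X(X\bslash\mathcal{S}_\iota)=0$, which follows from the $\eL^{Q,1}$ upper gradient, so (i) plays no essential role in either argument.
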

Combining Theorem \ref{main} with Theorem \ref{RNP non} produces a non-embedding result for $\lip$. 

We note that in the setting of Theorem \ref{RNP non}, the existence of an upper gradient of $\iota$ in $\eL^{Q,1}(X)$ implies that $\Lip \iota(x)<\infty$ at $\Hdim^Q$-almost every point, and moreover it implies that $\iota$ maps $\Hdim_X^Q$-null sets to $\Hdim_V^Q$-null sets \cite[Section~6]{SpaceFilling}. Hence, if it is also known that the image of $\iota$ is an Ahlfors $Q$-regular metric space supporting a $Q$-Poincar\'e inequality, then conditions (i) and (ii) above could be replaced by the symmetric requirement that $\iota\inv$ have an upper gradient in $\eL^{Q,1}(\iota(X)).$ 

Let us consider the concrete example of the Heisenberg group $\Heis$, which is an Ahlfors $4$-regular metric measure space, is  homeomorphic to $\reals^3$, supports a $1$\nobreakdash-Poincar\'e inequality, and has a measurable differentiable structure in which each chart has dimension $2$ \cite{primer}, \cite{PansuAnn}. Hence, in the language of Theorem~\ref{RNP non}, $Q>N(\alpha)$ for each $\alpha$. The differentiability theorem for Lipschitz mappings into a Banach space $V$ with the Radon-Nikodym property given in \cite{RNP} allows one to prove that there is no bi-Lipschitz embedding $\iota \colon \Heis \to V$. Our result states that there is not even an embedding $\iota \colon \Heis \to V$ satisfying the hypotheses of Theorem~\ref{RNP non}. We point out that the identity mapping from $\Heis$ to $\reals^3$ is Lipschitz on compact sets, and so some metric condition on the inverse of the embedding is needed.



The standard assumptions of Ahlfors regularity and a Poincar\'e inequality in the above theorems place conditions on large scales that are not natural for the conclusions, which are local in nature. However, the theorems still hold for arbitrary domains in Euclidean space, as can be seen by restriction to a sufficiently small closed ball. Moreover, it appears that Theorems \ref{main}, \ref{Stein}, and \ref{RNP non} remain valid when small scale and localized versions of these assumptions, as well as a local Lorentz integrability condition, are used instead. We have used global assumptions only for clarity of exposition, and we leave the aforementioned generalizations, which we expect can be proven in the same fashion, to the reader; see \cite[Remark~6.11]{SpaceFilling}.

This paper is organized as follows. In Section~\ref{notation} we establish notation and definitions regarding metric measure spaces, Lorentz integrability, and measurable differentiable structures. We then prove Theorem \ref{main} in Section \ref{main proof section}. This is followed in Section~\ref{Stein section} by a proof of the differentiation result Theorem \ref{Stein}.  Section \ref{weak tan section} discusses the basic properties of weak tangents and weak tangent mappings at points of differentiability, which leads to a proof of the non-embedding result Theorem~\ref{RNP non}. Finally, we give the example described in Theorem ~\ref{example1} in Section \ref{example section}. 

We wish to thank Jeff Cheeger, Amiran Gogatishvilli, Bruce Hanson, and Pekka Koskela for helpful comments.

\section{Notation and basic definitions}\label{notation}

\subsection{Metric measure spaces}
Given a metric space $(X,d)$, we denote the open ball centered at a point $x \in X$ of radius $r>0$ by
$$B_X(x,r)=\{y \in X: d(x,y)<r\}$$ 
and the corresponding closed ball by
$$\ovl{B}_X(x,r) = \{y \in X: d(x,y) \leq r\}.$$
When there is no danger of confusion, we often write $B(x,r)$ in place of $B_X(x,r)$.  A similar convention will be used for all objects that depend implicitly on the ambient space.  Given a subset $A$ of $X$ and a number $\ep>0$, we notate the $\ep$-neighborhood of $A$ by
$$\nbhd(A,\ep) = \{x \in X : \dist(A,x) < \ep\}.$$
Given an open ball $B=B(x,r)$ and a parameter $\lambda > 0$, we set $\lambda B =B(x,\lambda r)$.

A \emph{metric measure space} is a triple $(X,d,\mu)$ where $(X,d)$ is a metric space and $\mu$ is a measure on $X$. For our purposes, a \emph{measure} is a nonnegative countably subadditive set
function defined on all subsets of a measure space that gives the value $0$ to the empty set. We further
assume that measures are Borel inner and outer regular, and the collection of measurable sets is given by the completion of the Borel $\sigma$-algebra. We will often suppress the reference to the metric $d$ and the measure $\mu$ when they are understood.

The metric measure space $(X,d,\mu)$ is \emph{doubling} if balls have finite and
positive measure, and there is a constant $C\geq 1$ such
$\mu(2B)\leq C \mu(B)$ for any open ball $B$ in $X$. If $(X,d,\mu)$ is a doubling metric measure space, then the metric space $(X,d)$ enjoys the following property, also called \emph{doubling}: there is a number $n \in \nats$ such that any ball in $X$ of radius $r>0$ can be covered by at most $n$ balls of radius $r/2$.  It is easy to see that a doubling metric space is complete if and only if it is proper, i.e., closed and bounded sets are compact. Moreover, doubling metric spaces are separable.

Doubling metric spaces are precisely those of finite Assouad dimension \cite[Chapter 10]{LAMS}. However, this notion of dimension is not uniform; a doubling metric space may have some parts or scales where the space appears to be of lower dimension than is actually the case. We will have occasion to be more precise: the metric measure space $(X,d, \mu)$ is called \textit{Ahlfors $Q$-regular} if there exists a constant $C\geq 1$ such that for each point $a \in X$ and each radius $0<r<2\diam X$,
\begin{equation}\label{strong reg def} \frac{r^Q}{C} \leq \mu(B(a,r)) \leq Cr^Q.\end{equation}
Note that an Ahlfors $Q$-regular space is doubling, quantitatively.

We denote the $Q$-dimensional Hausdorff measure on a metric space $(X,d)$ by $\Hdim^Q_X$. If a metric measure space $(X,d,\mu)$ is Ahlfors $Q$-regular, then so is the metric measure space $(X,d,\Hdim^Q_X)$, and so we will often work directly with the Hausdorff measure in this case.

\subsection{Modulus, upper gradients, and Poincar\'e inequalities for metric space valued mappings}

A key idea in theory of analysis on metric spaces is to measure the plenitude of curves in a given space. For $p \geq 0$, the \emph{$p$-modulus} of a family $\Gamma = \{\gamma\colon [0,1] \to X\}$ of rectifiable paths in a metric measure space $X$ is given by
$$\modu_p(\Gamma)=\inf \int_X g^p d\mu,$$
where the infimum is taken over all \emph{admissible metrics} for $\Gamma$, namely, over all Borel functions $g\colon X \to [0,\infty]$ such that for all $\gamma \in \Gamma$
$$\int_\gamma g \ ds \geq 1.$$

The notion of modulus is closely tied to an analogue of the norm of a gradient of a  function on Euclidean space. Let $f \colon X \to Y$ be a mapping between metric spaces.   An \textit{upper gradient} of $f$ is a Borel function $g \colon X \to [0,\infty]$ such that for each rectifiable path $\gamma \colon [0,1] \to X$,
\begin{equation}\label{ug ineq}d_Y(f(\gamma(0)),f(\gamma(1))) \leq \int_{\gamma} g \ ds.\end{equation}
If it is only known that the set of rectifiable paths not satisfying inequality \eqref{ug ineq} is contained in a path family of zero $p$-modulus, then we say that $g$ is a \emph{$p$-weak upper gradient} of $f$.

If $f$ is locally Lipschitz, then the \emph{upper local Lipschitz constant} of $f$, defined by
$$\Lip(f)(x)=\limsup_{r \to 0} \sup_{y \in B(x,r)} \frac{d_Y(f(x),f(y))}{r},$$
is an upper gradient of $f$ \cite[Proposition 1.11]{Cheeger99}. In fact, in this case, a slightly more delicate argument shows that the \emph{lower local Lipschitz constant} of $f$, defined by
$$\lip(f)(x)=\liminf_{r \to 0} \sup_{y \in B(x,r)} \frac{d_Y(f(x),f(y))}{r},$$
is an upper gradient of $f$.  Both statements may fail if $f$ is not assumed to be locally Lipschitz.

We next briefly discuss integration of metric valued mappings. See \cite[Section 2]{HKST} and \cite[Section 3.3]{SpaceFilling} for a more detailed account. Let $Y$ be any metric space. A mapping $f \colon X \to Y$ is said to be \textit{Bochner measurable} if it is measurable in the usual sense and \textit{essentially separably valued}, meaning that there is a set $N \subeq X$ of measure $0$ such that $f(X\bslash N)$ is a separable subset of $Y$.

The following notion of local integrability of metric space valued mappings is perhaps not yet standard.

\begin{definition}[locally integrable]  \label{L1loc def} A mapping $f\colon X \to Y$ is in the class $\Lloc^1(X;Y)$, i.e., it is said to be \textit{locally integrable}, if it is Bochner measurable and there exists a point $z \in Y$ such that the function $x \mapsto d_Y(f(x),z)$ is in the space $\Lloc^1(X)$.
\end{definition}

In the case that $Y$ is a Banach space, Definition \ref{L1loc def} is equivalent to the condition that the mapping in question is \emph{locally Bochner integrable} (again, see \cite[Section 2]{HKST}). Moreover, a mapping $f \colon X \to Y$ is locally integrable if and only for every isometric embedding $\iota$ of $Y$ into any Banach space $V$, the composition $\iota \circ f \colon X \to V$ is locally Bochner integrable. If $X$ is a separable metric space, any continuous mapping from $X$ to another metric spaces has separable image and is locally Bochner integrable.

Given a measurable subset $E$ of $X$ of finite and positive measure, a Banach space $V$, and a locally Bochner integrable mapping $f \colon X \to V$, we define the \emph{average value of f on $E$} by
$$f_E = \mint_E f \ d\mu = \frac{1}{\mu(E)}\int_E f \ d\mu.$$

Fundamental work of Heinonen and Koskela has resulted in an analytic condition which guarantees the presence of ``many" rectifiable curves in a metric space \cite{Acta}. Fix $p \geq 1$. Let $V$ be a Banach space, let $f\colon X \to V$ be a locally integrable mapping, and let $g\colon X \to [0,\infty]$ be a measurable function.  The pair $(f,g)$  \textit{satisfies a $p$\nobreakdash-Poin\-ca\-r\'e inequality} with constant $C>0$ and dilation factor $\sigma>0$ if for each ball $B$ in $X$,
\begin{equation}\label{Lorentz:PI}
\mint_B |f-f_B| \ d\mu \leq C(\diam B)\left(\mint_{\sigma
B} g^p\, d\mu\right)^\frac{1}{p}.
\end{equation}
The space $(X,d,\mu)$ \textit{supports a $p$-Poincar\'e} inequality if there is a constant $C>0$ and a dilation factor $\sigma >0$ such that for any locally integrable function $f \colon X \to \reals$ and each $p$-weak upper gradient $g$ of $f$, the pair $(f,g)$ satisfies a $p$-Poincar\'e inequality with constant $C$ and dilation factor $\sigma$.

By \cite[Theorem 4.3]{HKST}, if $X$ is doubling and supports a $p$-Poincar\'e inequality, then for any metric space $Y$, any locally integrable mapping $f \colon X \to Y$, any $p$-weak upper gradient $g$ of $f$, and any isometric embedding $\iota$ of $Y$ into a Banach space $V$, the pair $(\iota \circ f,g)$ supports a $p$-Poincar\'e inequality, quantitatively.

\subsection{Measurable differential structures}\label{diff Section}

We briefly outline the theory of differentiation in metric spaces developed by Cheeger \cite{Cheeger99}.  The interested reader could also see the work of Keith \cite{Keith04} and the primer of Kleiner and Mackay \cite{primer}. 

\begin{definition}[measurable differentiable structure]
\label{stepanov:smds} A \emph{measurable differentiable structure} on a metric measure space $(X,d,\mu)$ is a countable collection
of pairs $\left\{\left(X_\alpha,\varphi_\alpha\right)\right\}_{\alpha \in I}$, called \emph{coordinate patches}, that satisfy the following conditions: \begin{itemize}
\item  for each $\alpha \in I$, the set $X_\alpha$ is a measurable subset of $X$ of positive measure,
\item the union $\bigcup_{\alpha} X_\alpha$ is pairwise disjoint and has full measure in $X$,
\item each $\varphi_\alpha$ is an $N(\alpha)$-tuple of Lipschitz functions on $X$, for some $N(\alpha)\in \nats$ that is bounded above independently of $\alpha$,
\item for every Lipschitz function $f\colon X \to \reals$, there exists a collection of measurable functions $\{\partial f/\partial\varphi_n^\alpha\}_{\alpha \in I, n\in \{1,\hdots, N(\alpha)\}}$ such that for each $\alpha \in I$ and for $\mu$-almost-every point $x \in X_\alpha$,
\begin{equation}\label{differential} \lim_{\substack{y\to x\\y\in X_\alpha \bslash \{x\}}}
\frac{\left| f(y)-f(x) - \sum_{n=1}^{N(\alpha)}(\varphi^\alpha_n(y)-\varphi^\alpha_n(x))\frac{\partial{f}}{\partial{\varphi^\alpha_n}}(x)\right|}{d(y,x)}=0,
\end{equation}
and moreover that this condition determines the collection $\{\partial f/\partial\varphi_n^\alpha\}$ unique\-ly up to sets of measure zero.
\end{itemize}
\end{definition}

\begin{definition}[differentiability]  Let $\left\{\left(X_\alpha,\varphi_\alpha\right)\right\}_{\alpha \in I}$ be a measurable differentiable structure on a metric measure space $(X,d,\mu)$ and let $V$ be a Banach space. Given a measurable subset $S$ of $X$, a Bochner measurable mapping $f \colon X \to V$ is \emph{differentiable at $\mu$-almost-every point of $S$} if there exists a collection of measurable functions $\{\partial f/\partial\varphi_n^\alpha\ \colon S \cap X_\alpha \to V\}_{\alpha \in I, n\in \{1,\hdots, N(\alpha)\}}$ such that for each $\alpha \in I$ and for $\mu$\nobreakdash-almost-every  point $x \in S \cap X_\alpha$,
\begin{equation}\label{differentialBanach} \lim_{\substack{y\to x\\y\in X_\alpha \bslash \{x\}}}
\frac{\left\lVert f(y)-f(x) - \sum_{n=1}^{N(\alpha)}(\varphi^\alpha_n(y)-\varphi^\alpha_n(x))\frac{\partial{f}}{\partial{\varphi^\alpha_n}}(x)\right\rVert_V}{d(y,x)}=0,
\end{equation}
and moreover that this condition determines the collection $\{\partial f/\partial\varphi_n^\alpha\}$ uniquely up to sets of measure zero in $S$.
\end{definition}

It is a deep result that many metric measure spaces have a measurable differentiable structure \cite{Cheeger99}. The following statement can be viewed as a Rademacher theorem for metric measure spaces.

\begin{theorem}[Cheeger]\label{StructExist} Let $(X,d,\mu)$ be a doubling metric measure space that supports a $p$-Poincar\'e inequality for some $1\leq p < \infty.$ Then there exists a measurable differentiable structure on $(X,d,\mu)$.
\end{theorem}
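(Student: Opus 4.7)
The plan is to follow Cheeger's original strategy, whose backbone is a study of the minimal $p$-weak upper gradient combined with a blow-up/maximality argument. First I would develop the basic calculus of minimal $p$-weak upper gradients for Lipschitz functions $f \colon X \to \reals$: under the Poincaré hypothesis, every Lipschitz $f$ admits a (unique up to $\mu$-nullsets) minimal $p$-weak upper gradient $g_f$, the assignment $f \mapsto g_f$ is $\reals$-linear and local (i.e. $g_{f-h}=0$ $\mu$-a.e.\ on $\{f=h\}$), and a Leibniz-type inequality holds. The key analytic input is then the \emph{nondegeneracy} statement: if $f$ is Lipschitz and not a.e.\ constant on a measurable set, then $g_f>0$ on a set of positive measure there; this is a direct consequence of a $(1,p)$-Poincaré inequality applied to appropriate cutoff-localizations of $f$.

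Next I would convert $g_f$ into pointwise first-order information. Using the doubling and Poincaré assumptions together with a telescoping chain of balls argument (of the form used to prove Hölder continuity), I would show that for $\mu$-a.e.\ $x$,
\begin{equation*}
\limsup_{r\to 0}\,\sup_{y\in B(x,r)} \frac{|f(y)-f(x)|}{d(x,y)} \leq C\, g_f(x),
\end{equation*}
and in particular $\Lip f(x)\le C\,g_f(x)$ almost everywhere, with a constant depending only on the doubling and Poincaré data. Combined with the nondegeneracy statement above, this gives the crucial ``quantitative separation'': if $f_1,\dots,f_N$ are Lipschitz and at a point of Lebesgue density the linear combination $f=\sum a_i f_i$ satisfies $g_f(x)=0$, then in fact $\sum a_i f_i$ is \emph{constant to first order} at $x$.

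Now I would build the charts. Fix a countable dense family $\{h_j\}$ of Lipschitz functions on the (separable) space $X$. For each finite tuple $\sigma=(j_1,\dots,j_N)$, say that $\sigma$ is \emph{independent at} $x$ if no nontrivial linear combination of the $h_{j_i}$ has vanishing minimal upper gradient at $x$; equivalently, by the previous step, if no nontrivial linear combination is first-order trivial at $x$. Call $\sigma$ \emph{maximal at} $x$ if additionally, for every index $k$, the tuple $(h_{j_1},\dots,h_{j_N},h_k)$ fails to be independent at $x$. Using only the doubling hypothesis (more precisely, the finiteness of the Assouad dimension derived from it) I would show that there is a uniform bound $N\le N_0$ on the size of any independent tuple, so a maximal tuple exists at $\mu$-a.e.\ $x$. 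Enumerate the finitely many possibilities and let $X_\alpha$ be the measurable set of $x$ where a fixed tuple $\sigma_\alpha$ is the lexicographically first maximal one; set $\varphi_\alpha$ to be this tuple. Passing to Lebesgue density points, the $X_\alpha$'s cover $X$ up to a nullset.

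Finally, to verify the differentiability condition \eqref{differential} for arbitrary Lipschitz $f$ on $X_\alpha$, I would use maximality together with the pointwise control $\Lip(\,\cdot\,)(x)\le C g_{(\,\cdot\,)}(x)$: by density of $\{h_j\}$ in the Lipschitz norm on bounded sets, I can approximate $f$ arbitrarily well by some $h_k$; maximality of $\sigma_\alpha$ at $x\in X_\alpha$ forces the existence of coefficients $a_1(x),\dots,a_{N(\alpha)}(x)$ with $g_{h_k-\sum a_i \varphi_{\alpha,i}}(x)=0$, hence $\Lip(h_k-\sum a_i \varphi_{\alpha,i})(x)=0$; chasing the approximation error through $f-h_k$ and a diagonal/Vitali covering argument produces measurable coefficients $\partial f/\partial \varphi^\alpha_n$ satisfying \eqref{differential} at $\mu$-a.e.\ $x\in X_\alpha$. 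Uniqueness of these coefficients is immediate from the independence half of maximality, since two competing choices would give a nontrivial linear combination of $\varphi_\alpha$ that is first-order trivial at $x$. The main obstacle is the passage from the abstract vanishing $g_f(x)=0$ to genuine pointwise first-order triviality $\Lip f(x)=0$; this is where the Poincaré inequality is used in an essential way, via the telescoping-ball estimate, and where measurability of the coefficient functions requires care through a Lusin/density refinement of the $X_\alpha$'s.
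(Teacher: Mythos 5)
First, a point of comparison: the paper does not prove this statement at all --- it is Cheeger's existence theorem for measurable differentiable structures, imported as a black box from \cite{Cheeger99} (with \cite{Keith04} and \cite{primer} as alternative expositions). So your sketch can only be measured against the original sources. Your architecture is the correct one: minimal $p$-weak upper gradients, a pointwise comparison between $\Lip f$ and $g_f$, a dimension bound for independent tuples, charts built from maximal tuples out of a countable family, and spanning plus uniqueness.

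The genuine gap is your second step, which you present as a routine ``telescoping chain of balls argument'' but which is in fact the heart of the theorem. Telescoping the Poincar\'e inequality along chains of balls yields only the Haj\l asz-type estimate
\begin{equation*}
|f(x)-f(y)| \leq C\, d(x,y)\bigl(M_p g_f(x) + M_p g_f(y)\bigr)
\end{equation*}
off a null set (exactly the estimate \eqref{hug} used in Section \ref{main proof section} of this paper). Taking the supremum over $y\in B(x,r)$ then leaves the term $\sup_{y\in B(x,r)} M_p g_f(y)$, which need not converge to $g_f(x)$ as $r\to 0$ even at Lebesgue points of $g_f^p$: a spike of $g_f$ concentrated near $x$ makes this supremum blow up while $g_f(x)$ stays finite. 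So the telescoping argument gives no pointwise a.e.\ bound of the form $\Lip f(x)\le C g_f(x)$. That inequality is Cheeger's Theorem~6.1 --- equivalently, combined with the elementary fact that $g_f\le \lip f$ a.e.\ for Lipschitz $f$, it is the ``Lip--lip'' inequality $\Lip f\le C\lip f$ a.e.\ --- and its proof occupies a large part of \cite{Cheeger99}, proceeding through generalized linear (harmonicity-type) functions, a blow-up, and an Egorov-type uniformization. Your maximality and spanning step depends on it essentially: it is the only way to upgrade $g_{h_k-\sum a_i\varphi_{\alpha,i}}(x)=0$ to $\Lip\bigl(h_k-\sum a_i\varphi_{\alpha,i}\bigr)(x)=0$, which is what the definition of differentiability requires. (Your dimension bound, by contrast, is fine: it needs only $g\le\lip$ plus doubling and a net-counting argument.) As written, the proposal reduces the theorem to its single hardest ingredient and then asserts that ingredient without proof.
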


We recall that a Banach space $V$ has the \emph{Radon-Nikodym property} if every Lipschitz function $f \colon \reals \to V$ is differentiable almost everywhere with respect to Lebesgue measure. There are several equivalent definition for this property, but  the one given above is the most natural in our context. The following theorem can be viewed as a Rademacher theorem for Banach space-valued mappings on metric spaces \cite{RNP}.

\begin{theorem}[Cheeger-Kleiner]\label{BanachRad} Let $(X,d,\mu)$ be a doubling metric measure space that supports a $p$-Poincar\'e inequality for some $1 \leq p < \infty$, and let $\left\{\left(X_\alpha,\varphi_\alpha\right)\right\}_{\alpha \in I}$ be a measurable differentiable structure on $(X,d,\mu)$. Then every Lipschitz mapping from $X$ to a Banach space with the Radon-Nikodym property is differentiable almost everywhere in $X$.
\end{theorem}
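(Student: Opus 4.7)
My plan is to leverage Cheeger's Theorem~\ref{StructExist}, which already provides chart-wise differentiability for real-valued Lipschitz functions, and to upgrade the conclusion to $V$-valued mappings using the Radon-Nikodym property. Fix a chart $(X_\alpha,\varphi_\alpha)$ of dimension $N=N(\alpha)$ and a Lipschitz map $f\colon X\to V$. For each $v^*\in V^*$ the composition $v^*\circ f\colon X\to\reals$ is Lipschitz, so Theorem~\ref{StructExist} produces measurable partials $\partial(v^*\circ f)/\partial\varphi_n^\alpha$ on $X_\alpha$ satisfying the real-valued limit~\eqref{differential} almost everywhere. The natural candidate for $\partial f/\partial\varphi_n^\alpha(x)$ is an element $L_n(x)\in V$ satisfying $v^*(L_n(x))=\partial(v^*\circ f)/\partial\varphi_n^\alpha(x)$ for every $v^*\in V^*$; without the Radon-Nikodym property this representation would only live in $V^{**}$, and it is precisely here that the hypothesis on $V$ enters.

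To produce the genuine $V$-valued candidates $L_n(x)$, I would invoke the RNP along curves: any Lipschitz $\gamma\colon\reals\to V$ is almost everywhere differentiable. The Heinonen-Koskela theory of PI spaces ensures that through $\mu$-a.e.\ point $x_0\in X_\alpha$ the family of rectifiable curves has positive $p$-modulus, so applying RNP to $f\circ\gamma$ yields $V$-valued directional derivatives of $f$ along generic curves through $x_0$. A blow-up argument then produces a bounded linear map $T_{x_0}\colon\reals^N\to V$, and I would set $L_n(x_0):=T_{x_0}(e_n)$. The real-valued Cheeger derivatives guarantee that $v^*(L_n(x_0))=\partial(v^*\circ f)/\partial\varphi_n^\alpha(x_0)$ for every $v^*\in V^*$, so the choice of $L_n(x_0)$ is independent of the curves used to construct it.

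The principal obstacle is upgrading this curve-wise and weak-$*$ differentiability to the strong chart-wise convergence in~\eqref{differentialBanach}. I would handle this by applying the Banach-valued version of the Poincar\'e inequality, valid by \cite[Theorem~4.3]{HKST}, to the error mapping
$$e(y):=f(y)-f(x_0)-\sum_{n=1}^N\bigl(\varphi_n^\alpha(y)-\varphi_n^\alpha(x_0)\bigr)L_n(x_0),$$
whose upper gradient has $L^p$-average over balls centered at $x_0$ tending to zero at generic points (by the scalar Cheeger identity together with a norming countable subset of $V^*$). Telescoping over a geometric sequence of shrinking balls around $x_0$ and exploiting the doubling property converts this $L^p$-vanishing into the pointwise $o(d(y,x_0))$ bound required by~\eqref{differentialBanach}. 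Uniqueness up to null sets of the partial derivatives then follows from the uniqueness clause in Cheeger's theorem applied to $v^*\circ f$ as $v^*$ ranges over a norming subset of $V^*$.
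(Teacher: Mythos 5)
First, a point of comparison: the paper does not prove Theorem \ref{BanachRad}. It is quoted from Cheeger and Kleiner \cite{RNP} and used as a black box in the proof of Theorem \ref{Stepanov}. So there is no in-paper argument to measure your proposal against; it must stand on its own as a proof of a genuinely deep theorem. Your sketch identifies the right objects --- the weak-$*$ derivative obtained by applying the scalar Cheeger theorem to $v^*\circ f$ for a countable norming family, and the role of the RNP in promoting it to a $V$-valued derivative --- but two steps are asserted rather than proved, and the second, as stated, would prove too much.

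The first gap is the sentence ``a blow-up argument then produces a bounded linear map $T_{x_0}\colon\reals^N\to V$.'' The RNP gives, for each fixed rectifiable curve $\gamma$, differentiability of $f\circ\gamma$ at a.e.\ parameter; passing from ``a.e.\ $t$ for every $\gamma$'' to ``generic curves through $\mu$-a.e.\ $x_0$'' already needs a Fubini-type argument over curve families, and, more seriously, nothing in the sketch explains why the resulting curve-wise derivatives depend only on, and linearly on, the velocity of $\varphi^\alpha\circ\gamma$ --- i.e.\ why they assemble into a single linear map on $\reals^N$. That compatibility is essentially the content of the theorem, not a routine blow-up. The second and fatal gap is the upgrade to \eqref{differentialBanach}: you claim the error map $e$ has an upper gradient whose $L^p$-averages over $B(x_0,r)$ tend to $0$, ``by the scalar Cheeger identity together with a norming countable subset of $V^*$.'' For each fixed $v^*$ the averages of $\lip(v^*\circ e)$ do vanish at a.e.\ point, but the rates and exceptional sets depend on $v^*$, while $\lVert e\rVert_V$ is a supremum over the norming family; the supremum does not commute with these averaged limits, and no single upper gradient of the $V$-valued error with vanishing averages is produced. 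Observe also that your Poincar\'e-plus-telescoping step never uses the curve-wise $V$-valued derivatives you built with the RNP; if that step were valid it would give a.e.\ differentiability into an arbitrary Banach target, which is false (there are nowhere differentiable Lipschitz maps from $[0,1]$ into $L^1$). The passage from weak-$*$ to norm differentiability is precisely where the RNP must do real work, and it is the point your argument elides; absent a complete argument there, the correct move --- and the one the paper makes --- is simply to cite \cite{RNP}.
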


\subsection{Lorentz spaces}
We now introduce the \emph{Lorentz spaces}, which refine the \linebreak Lebesgue spaces.

Given a measure space $(X,\mu)$ and a Banach space
$(V,\norm{\cdot}_V)$, we denote by $\mathcal{M}$ and
$\mathcal{M}_0$ the following classes of functions, respectively:
\begin{align*}
\mathcal{M}&:=\{f\colon X\to V: \text{ $f$
$\mu$\nobreakdash-measurable}\},\\
\mathcal{M}_0&:=\{\norm{f}_V\in \mathcal{M}: \text{$f$ finite
$\mu$\nobreakdash-almost everywhere}\}.
\end{align*}
Given $f\in \mathcal{M}_0$, we define the \emph{distribution
function} $\omega_f\colon [0,\infty)\to [0,\infty]$ and the
\emph{nonincreasing rearrangement} $f^*\colon [0,\infty)\to
[0,\infty]$ by
\begin{align*}
\omega_f(\alpha)&:=\mu(\{x\in X:\, \norm{f(x)}_V>\alpha\}),\\
f^*(t)&:=\inf\{\alpha\geq 0:\, \omega_f(\alpha)\leq t\}.
\end{align*}

Let $1\leq Q\leq
\infty$ and $0<q\leq \infty$. The
$(Q,q)$\nobreakdash-\emph{Lorentz class} consists of those
functions $f\in \mathcal{M}_0(X)$ such that the quantity
\begin{equation*}
\norm{f}_{Q,q}:=
\begin{cases}
(\int_0^\infty (t^{1/Q}f^*(t))^q\, \frac{dt}{t})^{1/q}, &
0<q<\infty, \\
\sup_{0<t<\infty}\{t^{1/Q}f^*(t)\}, & Q<\infty \text{ and }
q=\infty,\\
f^*(0), & q=Q=\infty
\end{cases}
\end{equation*}
is finite. If $1\leq q \leq Q$, then $\norm{\cdot}_{Q,q}$ defines
a semi-norm on the $(Q,q)$\nobreakdash-Lorentz class, and the
corresponding normed space $(L^{Q,q}(X),\norm{\cdot}_{Q,q})$ is a
Banach space. We refer to it as the
$(Q,q)$\nobreakdash-\emph{Lorentz space}.

\section{Proof of Theorem \ref{main}}\label{main proof section}

In this section, we assume the hypotheses of Theorem \ref{main}. Namely, we fix $Q \geq 1$ and $1 \leq q \leq Q$, and let $(X,d,\mu)$ be a complete and Ahlfors $Q$-regular metric space that supports a $q$-Poincar\'e inequality, and let $Y$ be any metric space. We consider a continuous and locally integrable function $f \colon X \to Y$ and the set
$$E =\{x \in X: \lip f(x)=\infty\}.$$

Let us consider the case $Q=1$, which implies that $q=1$. We assume that $\lip f \in L^{1,1}(X)=L^1(X)$, and that $E$ is countable. An easy extension of \cite[Lemma~3.9]{Z} to include metric valued mappings states that  $\lip f$ is a $1$\nobreakdash-weak
upper gradient of $f$ under precisely our assumptions. The general principle stating that the existence of a weak upper gradient in a certain integrability class implies the existence of a (non-weak) upper gradient in the same class (see \cite[Lemma~2.4]{Diamond}) remains valid in this setting, and hence there is an upper gradient of $f$ in every $L^1(X)$ neighborhood of $\lip f$. Since $L^{1}(X)=L^{1,1}(X)$, this suffices.

We now assume that $Q>1$. In the case that $q>1$, the open-endedness result of Keith and Zhong \cite[Theorem~1.0.1]{KeithZhong} states that $X$ actually supports a $q'$\nobreakdash-Poincar\'e inequality for some $q'<q$, quantitatively. Since a set that has Hausdorff dimension at most $(Q-q)$ has zero $(Q-q')$-dimensional Hausdorff measure, we assume without loss of generality that $1 \leq q < Q$, that $\lip f \in L^{Q,1}(X)$, and that the set $E$ has $\sigma$-finite $(Q-q)$-dimensional Hausdorff measure. We wish to show that $f$ has an upper gradient in the space $L^{Q,1}(X)$. Again,  \cite[Lemma~3.9]{Z} implies that $\lip f$ is a $q$-weak upper gradient of $f$, and hence the pair $(f,\lip f)$ satisfies the $q$-Poincar\'e inequality.

It is a well-known principle that if a function-gradient pair $(f,g)$ satisfies the $q$-Poincar\'e inequality, then the perturbed maximal function
$$ M_q(g)(x) := \left(\sup_{r > 0} \mint_{B(x,r)} g^q \ d\mu\right)^{1/q}$$
provides a pointwise bound almost everywhere on the oscillation of $f$, i.e., $M_q(g)$ is a \emph{Haj\l asz upper gradient} of $f$. See \cite[Theorem~3.2]{SobMetPoin} and \cite[Proposition~4.6]{HKST}. Accordingly, there is a constant $C \geq 1$, depending only on the data, and a set $N$ of $\mu$-measure zero such that for each pair of points $x,y \in X\bslash N$,
\begin{equation}\label{hug} d_Y(f(x),f(y)) \leq Cd_X(x,y)(M_q(\lip f)(x)+M_q(\lip f)(y)).\end{equation}

The Hardy-Littlewood maximal function theorem \cite[Theorem~2.2]{LAMS} and the Marcinkiewicz Interpolation Theorem \cite[Theorem~IV.4.13]{BS} can be shown to imply the boundedness of the operator $M_q \colon L^{Q,1}(X)\to
L^{Q,1}(X)$.
This was essentially stated in \cite{romanov_absolute_2008} and proven in detail in \cite[Section~4]{Paper3}.

The remainder of the proof is based on \cite[Lemmas 4.6 and 4.7]{Shanmugalingam00}. Define $g \colon X \to [0,\infty]$ by
$$g(x) =\begin{cases} CM_q(\lip f)(x) & x \notin N, \\
		      \infty & x \in N.\\
	\end{cases}$$
Then
$$||g||_{Q,1} \lesssim ||M_q(\lip f)||_{Q,1} \lesssim ||\lip f||_{Q,1},$$
and the inequality \eqref{hug} implies that for every pair of points $x$ and $y$ in $X$,
\begin{equation}\label{all hug}d_Y(f(x),f(y)) \leq d_X(x,y)(g(x)+g(y)).\end{equation}

We now complete the proof by showing that $4g$ is an upper gradient of $f$. It suffices to show that given an arc-length parameterized path $\gamma \colon [0,L] \to X$, it holds that
\begin{equation*}
d_Y(f(\gamma(0)),f(\gamma(L))) \leq \int_{\gamma} g \ ds.
\end{equation*}
We may assume that the integral of $g$ over $\gamma$ is
finite.

Fix $n \in \nats$, and for each $k\in \{0,1,\ldots, n-1\}$ define $\gamma_k = \gamma|_{I_k}$, where
$$I_k:=\left[ \frac{kL}{n},\frac{(k+1)L}{n}\right].$$
For each such $k$, there exits $x_k\in \gamma(I_k)$ such that
\begin{equation*}
g(x_k)\leq \frac{1}{\length{\gamma_k}}\int_{\gamma_k} g\, ds.
\end{equation*}
It follows that
$$d(x_k,x_{k+1})\leq \length(\gamma_k)+\length(\gamma_{k+1})=\frac{2L}{n}.$$
Using the inequality \eqref{all hug}, we see that
\begin{align*}
d_Y(f(x_0),f(x_{n-1}))&\leq \sum_{k=0}^{n-2} d_Y(f(x_k),f(x_{k+1}))
\leq \sum_{k=0}^{n-2} d(x_k,x_{k+1})(g(x_k)+g(x_{k+1})) \\
&\leq \sum_{k=0}^{n-2} \frac{2L}{n}\left(\frac{1}{\length(\gamma_k)} \int_{\gamma_k} g\, ds + \frac{1}{\length(\gamma_{k+1})}\int_{\gamma_{k+1}} g\, ds \right) \\
&\leq 4\sum_{k=0}^{n-1} \int_{\gamma_k} g\, ds = 4\int_{\gamma} g\, ds.
\end{align*}
Since $f$ and $\gamma$ are continuous, letting $n$ tend to infinity yields
\begin{equation*}
d_Y(f(\gamma(0)),f(\gamma(L))) \leq 4\int_{\gamma} g\, ds
\end{equation*}
as desired.

\section{The proof of Theorem \ref{Stein}} \label{Stein section}
In order to prove Theorem \ref{Stein}, we first show that the mapping $f$ in question satisfies $\Lip f(x)<\infty$ for almost every $x \in X$, and then apply a Stepanov-type theorem in the spirit of Cheeger and Kleiner.  The first step follows from \cite{SpaceFilling}. The second step is based on the Cheeger-Kleiner generalization of Rademacher's theorem (Theorem \ref{BanachRad} above), and the usual method of proving Stepanov's theorem from Rademacher's theorem (see, for example, \cite[3.1.9]{Federer}). Of novelty here is the use of Lang and Schlichenmaier's Lipschitz extension results.

For a mapping $f \colon X \to Y$ between metric spaces, we denote 
$$S_f = \{x \in X: \Lip f(x)<\infty\}.$$
The following lemma is standard.

\begin{lemma}\label{partition} Let $(X,d,\mu)$ be a separable metric measure space, $(Y,d_Y)$ any metric space, and $f \colon X \to Y$ a mapping. Then there is a sequence $\{C_k\}_{k \in \nats}$ of closed subsets of $X$ such that for each $k \in \nats$, the restriction $f|_{C_k}$ is Lipschitz and 
$$S_f=\bigcup_{k \in \nats} C_k$$
\end{lemma}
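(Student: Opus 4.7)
The plan is a two-level decomposition of $S_f$: first slice by the Lipschitz constant $i$ and the scale $1/j$ on which that constant bounds the oscillation of $f$ at each point, then further partition each slice using separability so that the pointwise Lipschitz bound at one endpoint transfers to a genuine Lipschitz estimate on the entire piece.

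Concretely, for each $i,j \in \nats$ I would define
\[
F_{i,j} := \{x \in X : d_Y(f(x),f(y)) \le i\, d(x,y) \text{ for all } y \in B(x,1/j)\},
\]
and verify two properties. First, $S_f = \bigcup_{i,j} F_{i,j}$: the inclusion $F_{i,j} \subseteq S_f$ is immediate because $\Lip f(x) \le i$ on $F_{i,j}$, while for $x \in S_f$ choosing any integer $i > \Lip f(x)$ and $j$ large enough that $\sup_{y \in B(x,r)} d_Y(f(x),f(y))/r < i$ for all $r < 1/j$ places $x$ in $F_{i,j}$. Second, each $F_{i,j}$ is closed: for $x_n \to x$ with $x_n \in F_{i,j}$, once $d(x_n,x) < 1/j$ the defining inequality at $x_n$ forces $f(x_n) \to f(x)$; for any $y \in B(x,1/j)$ the inequality $d_Y(f(x_n),f(y)) \le i\, d(x_n,y)$ holds for $n$ large, and passing to the limit via the triangle inequality yields $d_Y(f(x),f(y)) \le i\, d(x,y)$, so $x \in F_{i,j}$.

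Separability then lets me fix, for each $j$, a countable dense set $\{x_{j,n}\}_{n \in \nats}$ and set
\[
C_{i,j,n} := F_{i,j} \cap \ovl{B}(x_{j,n}, 1/(4j)),
\]
which is closed and has diameter at most $1/(2j) < 1/j$. Thus any two points $x,y \in C_{i,j,n}$ satisfy $y \in B(x,1/j)$, so by the defining property of $F_{i,j}$ the restriction $f|_{C_{i,j,n}}$ is $i$-Lipschitz. Since the closed balls $\ovl{B}(x_{j,n},1/(4j))$ cover $X$, reindexing $(i,j,n) \mapsto k$ through a bijection $\nats^3 \to \nats$ yields the required countable family with $S_f = \bigcup_k C_k$. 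The only genuinely subtle point is the closedness of $F_{i,j}$ in the absence of any continuity hypothesis on $f$, but this is essentially self-bootstrapping: membership in $F_{i,j}$ along the approximating sequence already forces continuity of $f$ at the limit point. Otherwise, the argument is just bookkeeping in the style of the classical proof of Stepanov's theorem.
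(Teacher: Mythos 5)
Your proposal is correct and follows essentially the same route as the paper: the paper's sets $A_k$ are exactly your $F_{i,j}$ with the constant and the scale tied to a single index, the closedness argument is identical, and the paper likewise uses separability to cover each slice by closed balls of diameter less than the relevant scale before reindexing. No gaps.
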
 

\begin{proof} For each $k \in \nats$, we define
$$A_k = \left\{x \in X: d_Y(f(x),f(y)) \leq kd_X(x,y) \ \text{for all}\ y \in B_X(x,1/k) \right\}.$$
Note that
$$S_f = \bigcup_{k=1}^\infty A_k.$$

We claim that each set $A_k$ is closed. Consider a sequence $x_n \in A_k$ and a point $x \in X$ such that $d(x,x_n)$ tends to $0$. Given a point $y \in B_X(x,1/k)$, there is a number $N \in \nats$ such that if $n \geq N$, then both $y$ and $x$ are in the ball $B(x_n,1/k)$. Hence, for all $n \geq N$,
$$d_Y(f(x),f(y)) \leq d_Y(f(x),f(x_n)) + d_Y(f(x_n),f(y)) \leq k(d_X(x,x_n) + d_X(x_n,y)).$$
Letting $n$ tend to $\infty$ shows that $x \in A_k$ and proves the claim.

As $X$ is separable, we may cover the set $A_k$ by a countable collection of closed balls $\{B_{k,j}\}_{j\in J_k}$ of diameter strictly less than $1/{2k}$. We now claim that for any $j \in J_k$, the map $f$ is $k$-Lipschitz in the closed set $A_k \cap B_{k,j}$. Indeed, given $x $ and $y$ in $A_k \cap B_{k,j}$, we see that $y \in B_X(x,1/k)$, and hence the definition of $A_k$ provides the desired inequality. Re-indexing the countable collection 
$$\{A_k \cap B_{k,j} : k \in \nats, j \in J_k\}$$
now completes the proof. 
\end{proof} 


\begin{theorem}\label{Stepanov} Let $(X,d,\mu)$ be a complete and doubling metric measure space satisfying a $p$-Poincar\'e inequality for some $p \in [1,\infty)$, and let $V$ be a Banach space with the Radon-Nikodym property. Then each measurable function $f\colon X \to V$ is differentiable at almost every point of the set $S_f=\{x \in X: \Lip f(x)<\infty\}$ with respect to any Cheeger structure on $X$.
\end{theorem}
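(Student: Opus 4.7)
The plan is to follow the classical Federer-style reduction of Stepanov's theorem to Rademacher's theorem, here realized in the metric measure setting by invoking the Cheeger-Kleiner theorem (Theorem \ref{BanachRad}) in place of the classical Rademacher theorem. First I would apply Lemma \ref{partition} to decompose $S_f = \bigcup_{k \in \nats} C_k$ into a countable union of closed sets, on each of which the restriction of $f$ is Lipschitz. After intersecting the pieces with the coordinate patches of the given measurable differentiable structure and passing to a countable refinement, I may assume each $C_k$ lies entirely in a single patch $X_{\alpha(k)}$.

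The next step is to extend each restriction $f|_{C_k} \colon C_k \to V$ to a Lipschitz mapping $f_k \colon X \to V$ defined on all of $X$. This is the main technical point: since $V$ is a general Banach space with no additional structure beyond the Radon-Nikodym property (which is of no use here), the classical McShane or Kirszbraun procedures are not available. I would invoke the absolute Lipschitz extension theorem of Lang and Schlichenmaier, which applies because doubling metric spaces have finite Nagata dimension and $V$ is a Banach space, yielding globally Lipschitz mappings $f_k$ that agree with $f$ on $C_k$ and whose Lipschitz constants depend only on $k$ and the data of $X$.

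With the extensions in hand, Theorem \ref{BanachRad} provides, for each $k$, a full-measure subset of $X$ on which $f_k$ is differentiable with respect to the given measurable differentiable structure. It remains to transfer this differentiability from $f_k$ to $f$ itself at $\mu$-almost every $x \in C_k$. By Lebesgue differentiation in the doubling space $X$, almost every point of $C_k$ is a point of density one; in the doubling setting this upgrades to the stronger property $\dist(y, C_k)/d(x, y) \to 0$ as $y \to x$. At such an $x$ that is also a differentiability point of $f_k$, I would set $\partial f/\partial \varphi_n^{\alpha(k)}(x) := \partial f_k/\partial \varphi_n^{\alpha(k)}(x)$ and verify the defining limit by, for each $y \in X_{\alpha(k)}$ close to $x$, choosing $z \in C_k$ with $d(y, z)$ small relative to $d(x, y)$ and comparing $f(y)$ to $f(z) = f_k(z)$ using the Lipschitz constant of $f_k$, the pointwise control coming from $\Lip f(x) < \infty$, and the differentiability of $f_k$ at $x$. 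Three contributions to the error then appear: the differential error of $f_k$ at $x$ evaluated at $z$ (controlled by differentiability of $f_k$), the discrepancy $\sum_n (\varphi_n^{\alpha(k)}(z) - \varphi_n^{\alpha(k)}(y))(\partial f_k/\partial \varphi_n^{\alpha(k)})(x)$ (controlled by the Lipschitz constants of $\varphi_n^{\alpha(k)}$), and the difference $f(y) - f(z)$ (controlled via $\Lip f(x) < \infty$ and the density estimate on $d(y, z)$).

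The main obstacle is the Lipschitz extension step: for general Banach space targets, Lipschitz extensions from subsets of metric spaces are not automatic, and having the Lang-Schlichenmaier theorem at our disposal is what unlocks the classical Stepanov-from-Rademacher argument in this generalized setting. The subsequent density transfer is classical in spirit, but some care is required to handle the affine combinations of the Lipschitz ``coordinates'' $\varphi_n^{\alpha(k)}$ in place of the linear coordinate functions available in Euclidean space.
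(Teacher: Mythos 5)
Your strategy coincides with the paper's proof: decompose $S_f$ via Lemma \ref{partition}, extend each $f|_{C_k}$ to a globally Lipschitz map using the Lang--Schlichenmaier theorem, differentiate the extensions via Theorem \ref{BanachRad}, and transfer the differential back to $f$ at density points of $C_k \cap X_\alpha$. The one place where your argument, as written, would fail is the control of the third error term $\lVert f(y)-f(z)\rVert_V$. You propose to bound it using $\Lip f(x)<\infty$ at the density point $x$ together with the density estimate $d(y,z)\leq \ep\, d(x,y)$. But $\Lip f(x)$ only controls oscillation of $f$ relative to the base point $x$: it yields $\lVert f(y)-f(z)\rVert_V \leq \lVert f(y)-f(x)\rVert_V + \lVert f(z)-f(x)\rVert_V \lesssim (\Lip f(x)+1)\, d(x,y)$, which is of the same order as $d(x,y)$ rather than $o(d(x,y))$, and the smallness of $d(y,z)$ never enters. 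What is actually needed, and what the paper uses, is the one-sided pointwise Lipschitz property built into the sets of Lemma \ref{partition}: by construction each $C_k$ is contained in some $A_{k'}$, so for $z \in C_k$ and \emph{any} $y \in B(z,1/k')$ one has $\lVert f(y)-f(z)\rVert_V \leq k'\, d(y,z) \leq k'\ep\, d(x,y)$, which is small relative to $d(x,y)$ as $\ep\to 0$. (This is also why the paper's radius in \eqref{density} is chosen so that $d(y_m,x_m)<1/k$.) Neither the Lipschitz constant of the extension $f_k$ nor $\Lip f(x)$ can substitute here, since $f(y)\neq f_k(y)$ off $C_k$ and $\Lip f(x)$ sees only differences against $x$.

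A second, smaller omission: the definition of differentiability also requires that the candidate differentials be uniquely determined up to null sets in $S_f$. The paper devotes the final part of its proof to this point, reducing it to the uniqueness of the differentials of the extensions; your sketch does not address it, although the verification is of the same density-point type.
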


\begin{proof} Let $\{C_k\}_{k \in \nats}$ be the sequence of closed sets provided by Lemma \ref{partition}. By the Lipschitz extension theorem of Lang and Schlichenmaier \cite[Theorem~1.5]{Lang}, there is a Lipschitz mapping $F_{k} \colon X \to V$ such that the restriction of $F_{k}$ to $C_k$ agrees with $f$.

Let $\{(X_\alpha,\varphi^\alpha)\}_{\alpha \in I}$ be a measurable differential structure on $X$. By the Cheeger-Kleiner generalization of Rademacher's theorem \cite[Theorem~1.5]{RNP}, the mapping $F_{k}$ is differentiable at almost every point of $X$ with respect to this structure. Let
\begin{equation}\label{partial derivs F} \{\partial{F_{k}}/\partial{\varphi^\alpha_n}\colon X_\alpha \to V\}_{\alpha \in I, n=1,\hdots, N(\alpha)}\end{equation}
be the collection of partial derivatives of $F_{k}$ over all coordinate patches.

In order to show the differentiability of $f$ almost everywhere in the set $S_f$, we will show that the countable collection
\begin{equation} \label{partial derivs f}\{\left(\partial{F_{k}}/\partial{\varphi^\alpha_n}\right)|_{S_f} \colon S_f \cap X_\alpha \to V\}_{\alpha \in I, n=1,\hdots, N(\alpha)}\end{equation}
consists of Borel measurable functions satisfying, for each $\alpha \in I$, almost every point $x_0$ in \mbox{$S_f \cap X_\alpha$}, any sequence $\{y_m\}_{m \in \nats}$ of points in $X_\alpha\bslash \{x_0\}$ that converges to $x_0,$
\begin{equation}\label{diff lim}\limsup_{m \to \infty} \frac{\left\lVert f(y_m)-f(x_0) - \sum_{n=1}^{N(\alpha)}(\varphi^\alpha_n(y_m)-\varphi^\alpha_n(x_0))\frac{\partial{F_{k,j}}}{\partial{\varphi^\alpha_n}}(x_0)\right\rVert_V}{d(y_m,x_0)}=0, \end{equation}
and moreover that up to a set of measure zero in $S_f$, this collection is uniquely specified.

The Borel measurability of the functions in \eqref{partial derivs f} follows from the corresponding statement for the functions in \eqref{partial derivs F} and the decomposition of $S_f$ into a countable union of closed sets. By this decomposition, the Lebesgue differentiation theorem \cite[Theorem~1.8]{LAMS}, and the almost everywhere differentiability of the functions $\{F_{k}\}$, we need only show \eqref{diff lim} for a point $x_0$ that is both a density point of  $C_k \cap X_\alpha$ and a point of differentiability of $F_{k}$, for some $k \in \nats$ and $\alpha \in I$. 

Let $\{y_m\}_{m \in \nats} \subeq X_\alpha$ be a sequence converging to $x_0$, and fix $0<\ep<1$. As noted on \cite[Page 409]{BRZ07}, since $x_0$ is a density point for $C_k \cap X_\alpha$, there is a radius $0<r<1/(2k)$ such that if $y_m \in B(x_0,r)$, then there is a point $x_m \in C_k \cap X_\alpha$ satisfying
\begin{equation}\label{density}d(y_m,x_m) \leq \ep d(y_m,x_0) < 2r < 1/k.\end{equation}
We may assume without loss of generality that $y_m \in B(x_0,r)$ for every $m \in \nats$.  For each $m \in \nats$, define
$$L_1(m) = \frac{\left\lVert f(y_m)-f(x_m) - \sum_{n=1}^{N(\alpha)}(\varphi^\alpha_n(y_m)-\varphi^\alpha_n(x_m))\frac{\partial{F_{k}}}{\partial{\varphi^\alpha_n}}(x_0)\right\rVert_V}{d(y_m,x_0)},$$
$$L_2(m) = \frac{\left\lVert f(x_m)-f(x_0) - \sum_{n=1}^{N(\alpha)}(\varphi^\alpha_n(x_m)-\varphi^\alpha_n(x_0))\frac{\partial{F_{k}}}{\partial{\varphi^\alpha_n}}(x_0)\right\rVert_V}{d(y_m,x_0)}.$$

We claim that
\begin{equation}\label{limsup}\limsup_{m \to \infty} \left(L_1(m)+L_2(m) \right) = 0,\end{equation}
which implies \eqref{diff lim} by the triangle inequality. Fix $m \in \nats$. Recall that $f|_{C_k}$ is $L$-Lipschitz, for some $L \geq 1$; without loss of generality, we may assume that each chart mapping $\varphi_n^\alpha$ is also $L$-Lipschitz. By the triangle inequality, \eqref{density}, and the fact that $x_m \in C_k$,
\begin{align*}L_1(m) \leq & \ep\left(\frac{||f(y_m)-f(x_m)||_V}{d(y_m,x_m)} +\frac{\left(\sum_{n=1}^{N(\alpha)}|\varphi^\alpha_n(y_m)-\varphi^\alpha_n(x_m)|\right) \left\lVert\frac{\partial{F_{k}}}{\partial{\varphi^\alpha_n}}(x_0)\right\rVert_V}{d(y_m,x_m)} \right) \\ &\leq \ep \left( L + \sum_{n=1}^{N(\alpha)}L \left\lVert\frac{\partial{F_{k}}}{\partial{\varphi^\alpha_n}}(x_0)\right\rVert_V \right).
\end{align*}
Note that the final quantity above is independent of $m$, and tends to zero as $\ep$ tends to zero.

Moreover, the triangle inequality, the inequalities \eqref{density}, and the fact that $x_m$ and $x_0$ are points of $C_k \cap B_j$ imply that
\begin{align*} L_2(m) &\leq  \frac{\left\lVert f(x_m)-f(x_0) - \sum_{n=1}^{N(\alpha)}(\varphi^\alpha_n(x_m)-\varphi^\alpha_n(x_0))\frac{\partial{F_{k}}}{\partial{\varphi^\alpha_n}}(x_0)\right\rVert_V}{d(x_m,x_0)} \\ & \hspace{2cm} \cdot \frac{d(x_m,y_m)+d(y_m,x_0)}{d(y_m,x_0)} \\
& \leq \frac{\left\lVert F_{k}(x_m)-F_{k}(x_0) - \sum_{n=1}^{N(\alpha)}(\varphi^\alpha_n(x_m)-\varphi^\alpha_n(x_0))\frac{\partial{F_{k}}}{\partial{\varphi^\alpha_n}}(x_0)\right\rVert_V}{d(x_m,x_0)}\cdot(1+\ep).
\end{align*}
Since $F_{k,j}$ is differentiable at $x_0$, the above quantity tends to zero as $m$ tends to infinity. This completes the proof of \eqref{limsup} and hence of \eqref{diff lim}.

We now show that the collection \eqref{partial derivs f} is unique up to a set of measure zero in $S_f$. By the uniqueness of the collection \eqref{partial derivs F}, it suffices to show that if $x_0$ is a point of density of some $C_k \cap X_\alpha$, and $\{g_n\}_{n=1}^{N(\alpha)}$ is a collection of Borel measurable functions satisfying, for any sequence $\{y_m\} \subeq X_\alpha$ converging to $x_0$,
\begin{equation}\label{diff lim 1}\limsup_{m \to \infty} \frac{\left\lVert f(y_m)-f(x_0) - \sum_{n=1}^{N(\alpha)}(\varphi^\alpha_n(y_m)-\varphi^\alpha_n(x_0))g_n(x_0)\right\rVert_V}{d(y_m,x_0)}=0, \end{equation}
then it also holds that
\begin{equation}\label{diff lim 2}\limsup_{m \to \infty} \frac{\left\lVert F_k(y_m)-F_k(x_0) - \sum_{n=1}^{N(\alpha)}(\varphi^\alpha_n(y_m)-\varphi^\alpha_n(x_0))g_n(x_0)\right\rVert_V}{d(y_m,x_0)}=0. \end{equation}
The proof of this is analogous to the proof of \eqref{diff lim} and is left to the reader.
\end{proof}

\begin{proof}[Proof of Theorem \ref{Stein}] We fix $Q \geq 1$, and let $(X,d,\mu)$ be a complete and Ahlfors $Q$-regular metric space that supports a $Q$-Poincar\'e inequality. Moreover, let $V$ be a Banach space that has the Radon-Nikodym property, i.e., every Lipschitz function from $\reals$ to $V$ is differentiable almost everywhere with respect to the Lebesgue measure on $\reals$. We consider a continuous $f \colon X \to V$ that is assumed to have an upper gradient in the Lorentz space $L^{Q,1}$.

It follows from \cite[Corollary~6.7]{SpaceFilling} that $f$ satisfies the \emph{$Q$-Rado-Reichelderfer condition}, meaning that there is a number $\sigma >0$ and a non-negative function $\Theta \in L^1(X)$ such that for any ball $B$ in $X$,
$$\diam(f(B))^Q \leq \int_{\sigma B} \Theta \ d\mu.$$
An easy computation now shows that $\Lip f(x)<\infty$ for almost every $x \in X$ \cite[Proposition~6.4]{SpaceFilling}. Theorem \ref{Stepanov} completes the proof.
 \end{proof}

\section{Non-embedding results}\label{weak tan section}
We now apply the results of the previous sections to prove a non-embedding result, Theorem \ref{RNP non}. The material in the preparatory first two subsections might be familiar to experts. 

\subsection{Existence and basic properties of weak tangent mappings}\label{weak tan section1}

We begin with the existence and properties of a weak tangent mapping of a bi-Lipschitz embedding.  This construction is standard and can be accomplished in several different but equivalent ways; see, for example, \cite{Cheeger99} or \cite{Burago}. We include it for convenience and because we will later refer to specific parts of the construction. We follow the method of David and Semmes \cite{david_fractured_1997}. This approach has the advantage of being concrete although somewhat cumbersome in notation; it uses the Assouad embedding theorem to transfer the abstract notion of pointed Gromov-Hausdorff convergence to the Euclidean setting. In Euclidean space, the relevant notions of convergence are as follows.

A sequence $\{F_j\}_{j\in \nats}$ of non-empty closed subsets of $\reals^N$ \emph{converges} to a non-empty closed subset $F$ of $\reals^N$ if for all $r>0$,
\begin{align*} & \lim_{j \to \infty} \sup\{\dist(x,F): x \in F_j \cap B(0,r)\} = 0,\ \rm{and} \\
 & \lim_{j \to \infty} \sup\{\dist(x,F_j): x \in F \cap B(0,r)\}=0,
 \end{align*}
where we use the convention $\sup \emptyset=0$. 

A sequence of functions $\{f_j \colon F_j \to \reals^N\}_{j \in \nats}$ \emph{converges} to a function $f \colon F \to \reals^N$ if the sequence of sets $\{F_j\}_{j\in \nats}$ converges to the set $F$ and for every convergent sequence $\{x_j \in F_j\}_{j \in \nats}$,
 $$f\left(\lim_{j\to \infty} x_j\right) =\lim_{j \to \infty} f_j(x_j).$$

We recall that a \emph{pointed metric space} is a triple consisting of a set, a metric on the set, and a point in the set (called the \emph{base point}). We abuse notation by denoting, for $N \in \nats$, the pointed metric space $(\reals^N,||\cdot||_{\reals^N},0)$ by $\reals^N$. A \emph{pointed mapping package} is a triple $(A,B,f)$ where $A$ and $B$ are pointed metric spaces, and $f$ is a function between the sets underlying $A$ and $B$ that preserves the base point.

We now assume that $(X,d_X,x_0)$ is a complete and doubling pointed metric space, and that $C \subeq X$ is a closed set containing $x_0$. We denote by $(V,||\cdot||_V)$ a Banach space. We consider an embedding $\iota \colon X \into V$, and we assume that there is a number $A \geq 1$ such that the restriction $\iota|_{C}$ is an $A$-bi-Lipschitz embedding of $C$ into $V$.

Take any sequence of scales $\{r_j\}_{j \in \nats}$ tending to zero.  For each $j \in \nats$, define pointed metric spaces
\begin{align*} C_j &= (C,d_X/r_j,x_0), \\   \iota(C)_j &= (\iota(C),||\cdot||_V/r_j,\iota(x_0)).
\end{align*}
Note that these spaces are again complete and doubling with a uniform constant.  Gromov's compactness theorem, Lemma~8.13 in \cite{david_fractured_1997}, implies that for some subsequence of the scales $\{r_j\}_{j\in \nats}$ (which we do not relabel) the corresponding sequences of pointed metric spaces converge, and we denote the limit spaces by
\begin{align*}
C_{x_0} = (C_\infty,d_\infty,x_\infty)\ \text{and}\ 
 \iota(C)_{\iota(x_0)} = (\iota(C)_\infty, \rho_\infty,y_\infty),
\end{align*}
respectively. The space $C_{x_0}$ is called a \emph{weak tangent} to $C$ at $x_0$, and $\iota(C)_{\iota(x_0)}$ is named analogously. 

After passing to another subsequence of $\{r_j\}_{j\in \nats}$ (again, without relabeling), we may assume that pointed mapping packages $\{(C_j,\iota(C)_j,\iota)\}_{j \in \nats}$ converge to a pointed mapping package $(C_{x_0},\iota(C)_{\iota(x_0)},\iota_\infty)$; see Lemma~8.22 in \cite{david_fractured_1997}.

The meaning of the above convergence is the following:
\begin{itemize}
\item there are dimensions $N, M \in \nats$, exponents $\alpha, \beta \in (0,1]$, constants $K, L \geq 1$, sequences of pointed mapping packages
\begin{align*} \{ & (C_j,\reals^N,f_j)\}_{j \in \nats}, \ \ \ \{(\iota(C)_j,\reals^M, g_j)\}_{j \in \nats}, \\
\intertext{and ``limit" pointed mapping packages} &
(C_{x_0},\reals^N , f), \ \ \ \ \ \ \ \ \ (\iota(C)_{\iota(x_0)},\reals^M,g),
\end{align*}
where each of $\{f_j\}_{j \in \nats}$ and $f$ is $(\alpha, K)$-bi-H\"older, and each of $\{g_j\}_{j \in \nats}$ and $g$ \ is $(\beta, L)$-bi-H\"older.
\item the sequences of subsets $\{f_j(C_j)\}_{j \in \nats}$ and $\{g_j(\iota(C_j))\}_{j \in \nats}$ converge to $f(C_{x_0})$ and $g(\iota(C)_{\iota(x_0)})$, respectively,
\item the maps 	
\begin{align*} \frac{d_X(f_j\inv(\cdot),f_j\inv(\cdot))}{r_j} &\colon f_j(C_j) \times f_j(C_j) \to \reals, \\
					 \frac{||g_j\inv(\cdot)-g_j\inv(\cdot)||_{V}}{r_j} &\colon g_j(\iota(C)_j) \times g_j(\iota(C)_j) \to\reals, \\
					 g_j \circ \iota \circ f_j\inv &\colon f_j(C) \to g_j(\iota(C)),
					 \end{align*}
converge to
\begin{align*} d_\infty(f\inv(\cdot),f\inv(\cdot)) &\colon f(C_{x_0}) \times f(C_{x_0}) \to \reals, \\
					 \rho_\infty(g\inv(\cdot),g\inv(\cdot)) & \colon  g(\iota(C)_{\iota(x_0)}) \times g(\iota(C)_{\iota(x_0)}) \to \reals, \\
					 g \circ \iota_\infty \circ f\inv &\colon f(C_{x_0}) \to g(\iota(C)_{\iota(x_0)}),\end{align*}
respectively.
\end{itemize}
It can easily be seen that the limit map $\iota_\infty \colon C_{x_0} \to \iota(C)_{\iota(x_0)}$ is also an $A$-bi-Lipschitz embedding. Moreover, the doubling condition on $X$, and hence $C$, persists in the limit: $C_{x_0}$ and thus $\iota(C)_{\iota(x_0)}$ are also doubling.  

\subsection{Properties of weak tangent mappings at points of differentiability}\label{weak tan section2}
We now additionally assume the existence of a measure $\mu$ on $(X,d_X)$ so that the resulting metric measure space $(X,d_X,\mu)$ has a measurable differentiable structure $\{(X_\alpha,\varphi_\alpha)\}_{\alpha \in I}$, and that the embedding $\iota \colon X \into V$ is differentiable almost everywhere in $C$, with differentials $\{\frac{\partial \iota}{\partial \varphi_n^\alpha} \colon  (C \cap X_{\alpha}) \to V\}_{\alpha \in I, n \in \{1,\hdots, N(\alpha)\}}.$ Recall that this means that for each $\alpha \in I$ and $\mu$-almost every point $x_0 \in C \cap X_\alpha$,
\begin{equation}\label{diff reminder} \lim_{\substack{x\to x_0\\x\in X_\alpha \bslash \{x_0\}}}
\frac{\left| \iota(x)-\iota(x_0) - \sum_{n=1}^{N(\alpha)}(\varphi^\alpha_n(x)-\varphi^\alpha_n(x_0))\frac{\partial{\iota}}{\partial{\varphi^\alpha_n}}(x_0)\right|}{d(x,x_0)}=0,
\end{equation}
and moreover that this condition determines the collection $\{\frac{\partial \iota}{\partial \varphi_n^\alpha}\}_{n=1,\hdots,N(\alpha)}$ unique\-ly up to sets of measure zero in $C \cap X_{\alpha}$.

 The following proposition, which follows immediately from the definitions, says that given a point $x_0 \in C \cap X_\alpha$ at which \eqref{diff reminder} holds,  the image $\iota(C)$ is \emph{contained up to first order in the span of the differentials of $\iota$ at $x_0$}, a concept introduced by Cheeger and Kleiner \cite{RNP}.  We denote by
 $F_{x_0}$ the span in $V$ of the vectors $\{\frac{\partial \iota}{\partial \varphi_n^\alpha}(x_0)\}_{n=1,\hdots,N(\alpha)}$.
\begin{proposition}\label{finite dim} Let $x_0 \in C \cap X_\alpha$ be a point at which \eqref{diff reminder} holds. Then
$$\limsup_{r \to 0} \sup_{x \in B_X(x_0,r) \cap C} \frac{\dist_V(\iota(x)-\iota(x_0), F_{x_0})}{r}=0.$$
\end{proposition}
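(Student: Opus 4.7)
The plan is a direct two-step argument: first I unpack \eqref{diff reminder} into the desired distance estimate along $X_\alpha$, and then I transfer it to all of $C$ via a Lebesgue density argument combined with the bi-Lipschitz property of $\iota|_C$.

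\emph{Inside $X_\alpha$.} The relation \eqref{diff reminder} reads
$$\iota(x') - \iota(x_0) = \sum_{n=1}^{N(\alpha)} \bigl(\varphi_n^\alpha(x') - \varphi_n^\alpha(x_0)\bigr)\frac{\partial \iota}{\partial \varphi_n^\alpha}(x_0) + o(d(x',x_0))$$
as $x' \to x_0$ along $X_\alpha \setminus \{x_0\}$. The displayed sum lies in $F_{x_0}$ by the very definition of $F_{x_0}$, hence $\dist_V(\iota(x') - \iota(x_0), F_{x_0}) = o(d(x',x_0))$. This is the desired conclusion with the supremum restricted to $B_X(x_0,r) \cap X_\alpha$.

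\emph{Passing from $X_\alpha$ to $C$.} Because $\mu$ is Ahlfors $Q$-regular, the Lebesgue differentiation theorem implies that $\mu$-almost every point of $C \cap X_\alpha$ is a density point of $C \cap X_\alpha$. Since \eqref{diff reminder} also holds almost everywhere, I may add to the hypotheses on $x_0$ that it is such a density point; this is the same standard reduction used in the proof of Theorem~\ref{Stepanov} (cf.\ page~409 of \cite{BRZ07}). Fix $\ep \in (0,1)$. The $Q$-regularity of $\mu$ then yields a positive lower bound, depending only on $\ep$ and the regularity constant, for the ratio $\mu(B_X(x,\ep d(x,x_0)))/\mu(B_X(x_0,(1+\ep) d(x,x_0)))$, so for all sufficiently small $r > 0$ and every $x \in C \cap B_X(x_0, r)$ the density of $x_0$ in $C \cap X_\alpha$ forces the existence of a point $x' \in C \cap X_\alpha$ with $d(x, x') \leq \ep\, d(x, x_0)$.

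\emph{Conclusion.} Let $A$ be the bi-Lipschitz constant of $\iota|_C$. The triangle inequality, together with the first step applied to $x'$, gives
$$\dist_V(\iota(x) - \iota(x_0), F_{x_0}) \leq \|\iota(x) - \iota(x')\|_V + \dist_V(\iota(x') - \iota(x_0), F_{x_0}) \leq A\ep\, d(x,x_0) + o(d(x',x_0)),$$
and since $d(x',x_0) \leq (1+\ep) d(x,x_0) \leq (1+\ep) r$, dividing by $r$ and taking $\limsup_{r \to 0}$ bounds the supremum in the proposition by $A\ep$. Sending $\ep \to 0$ finishes the argument. The only real obstacle is the density-based selection of $x'$, but it reduces immediately to the measure comparison above; in this sense the proposition does ``follow from the definitions'' once the standard density reduction is in hand.
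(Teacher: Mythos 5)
Your proof is correct, but it does noticeably more work than the paper, which offers no argument at all: the proposition is simply asserted to ``follow immediately from the definitions.'' That assertion is literally true only for the supremum taken over $B_X(x_0,r)\cap C\cap X_\alpha$, which is exactly your first step --- the sum appearing in \eqref{diff reminder} lies in $F_{x_0}$ by definition, so the distance to $F_{x_0}$ is dominated by the error term. Because the paper's definition of differentiability only controls the limit along $X_\alpha\setminus\{x_0\}$, and $C$ is not assumed to be contained in $X_\alpha$, your second step --- replacing a general $x\in C$ by a nearby $x'\in C\cap X_\alpha$ with $d(x,x')\leq \ep\, d(x,x_0)$ via the density of $x_0$ in $C\cap X_\alpha$, and absorbing the discrepancy using that $\iota|_C$ is $A$-Lipschitz --- is a genuine addition, and it is carried out correctly; it is the same device as \eqref{density} in the proof of Theorem~\ref{Stepanov}. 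The one point to flag is that you have thereby proved a version of the proposition with an extra hypothesis ($x_0$ a density point of $C\cap X_\alpha$) that is not present in the statement. This is harmless for the application in Theorem~\ref{RNP non}, where $x_0$ may be chosen to be such a density point, but your argument does not establish the proposition for an arbitrary point at which \eqref{diff reminder} holds, and for such a point the chart-restricted limit by itself gives no control over $\iota$ on $C\setminus X_\alpha$.
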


We now use this information to bound the dimension of the tangent spaces to $C$ by the dimension of the differentiable structure on $X$. When $C=X$ this type of result can be found in \cite[Section~14]{Cheeger99} and is also mentioned in \cite{NotRNP}, although we have not been able to find an explicit proof of the proposition below in the literature. 

\begin{proposition}\label{dim bound}  Let $x_0 \in C \cap X_\alpha$ be a point at which \eqref{diff reminder} holds. Then the weak tangent space $\iota(C)_{\iota(x_0) }$ isometrically embeds into $F_{x_0}$.
\end{proposition}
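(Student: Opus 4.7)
My plan is to construct the required isometric embedding as a limit of nearest-point projections from $V$ onto the finite-dimensional subspace $F_{x_0}$, composed with the scaling by $1/r_j$ that produces the weak tangent from $\iota(C)_j$. Proposition~\ref{finite dim} provides exactly the asymptotic flatness needed for such projections to be asymptotically distance-preserving.

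First I would note that, because $F_{x_0}$ is finite-dimensional and hence proximinal in $V$, there exists a (not necessarily canonical) nearest-point map $P \colon V \to F_{x_0}$ with $\|v - P(v)\|_V = \dist_V(v, F_{x_0})$. Writing
\begin{equation*}
\epsilon(x) := \dist_V(\iota(x) - \iota(x_0), F_{x_0}),
\end{equation*}
Proposition~\ref{finite dim} gives $\epsilon(x) = o(d_X(x, x_0))$ uniformly as $x \to x_0$ in $C$. For any $x, y \in C$, the triangle inequality yields the key asymptotic isometry estimate
\begin{equation*}
\bigl| \, \|P(\iota(x) - \iota(x_0)) - P(\iota(y) - \iota(x_0))\|_V - \|\iota(x) - \iota(y)\|_V \, \bigr| \leq \epsilon(x) + \epsilon(y).
\end{equation*}

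To pass to the limit, I would choose a countable dense subset $\{p_k\}_{k \in \nats}$ of $\iota(C)_{\iota(x_0)}$, which is separable because it is doubling. The David-Semmes convergence apparatus of Section~\ref{weak tan section1}, together with the $A$-bi-Lipschitzness of $\iota|_C$, produces sequences $x_{k,j} \in C$ such that $d_X(x_{k,j}, x_0) = O(r_j)$ and $\|\iota(x_{k,j}) - \iota(x_{l,j})\|_V / r_j \to \rho_\infty(p_k, p_l)$ for all $k, l$ along the chosen subsequence of scales. Because $\|P(\iota(x_{k,j}) - \iota(x_0))\|_V / r_j$ is bounded and the vectors lie in the finite-dimensional space $F_{x_0}$, a diagonal argument on $k$ yields a further subsequence along which
\begin{equation*}
\Phi(p_k) := \lim_{j \to \infty} \frac{P(\iota(x_{k,j}) - \iota(x_0))}{r_j} \in F_{x_0}
\end{equation*}
exists for every $k$. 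Dividing the key estimate by $r_j$, letting $j \to \infty$, and using $\epsilon(x_{k,j})/r_j \to 0$ then gives $\|\Phi(p_k) - \Phi(p_l)\|_V = \rho_\infty(p_k, p_l)$. Thus $\Phi$ is an isometry on the dense set $\{p_k\}$ and extends by uniform continuity to the desired isometric embedding of $\iota(C)_{\iota(x_0)}$ into $F_{x_0}$.

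The main technical hurdle I anticipate is the interplay between the ad hoc ``project-then-rescale'' operation, which is not even continuous a priori, and the abstract weak-tangent convergence machinery: one must ensure that the chosen sequences $\{x_{k,j}\}$ can be assembled coherently so that the almost-isometry estimate passes to the limit simultaneously for every pair $k, l$, and that the resulting limit map $\Phi$ is genuinely well-defined on $\iota(C)_{\iota(x_0)}$ rather than only on a sequence-dependent subset. This is ultimately a diagonalization exercise, enabled by the separability of $\iota(C)_{\iota(x_0)}$ and by the fact that $\epsilon(x)/d_X(x, x_0) \to 0$ uniformly on balls of bounded rescaled size around $x_0$.
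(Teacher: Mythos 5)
Your proposal is correct and follows essentially the same route as the paper: use Proposition~\ref{finite dim} to approximate $\iota(x)-\iota(x_0)$ by elements of $F_{x_0}$ up to $o(r_j)$, exploit finite-dimensionality to extract convergent rescaled limits, diagonalize over a countable dense subset of the (separable, doubling) weak tangent, verify the isometry via the convergence of the rescaled distance functions, and extend by density into the complete space $F_{x_0}$. The only cosmetic difference is that you package the approximants as a fixed nearest-point projection $P$ and derive the isometry from the explicit almost-isometry estimate $\bigl|\,\lVert P(u)-P(v)\rVert_V-\lVert u-v\rVert_V\,\bigr|\leq \epsilon(x)+\epsilon(y)$, whereas the paper chooses the approximating points $y_j\in F_{x_0}$ ad hoc and passes to the limit of $(\iota(x_j)-\iota(x_0))/r_j$ directly.
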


\begin{proof} 
Let $y$ be a point in the weak tangent space $\iota(C)_{\iota(x_0)}$. By definition, we may find a sequence of points $x_j \in C$ such that $\lim_{j \to \infty} g_j(\iota(x_j)) = g(y) \in \reals^M$.  As the mapping in a pointed mapping package preserves base points, $g_j(\iota(x_0))=0 \in \reals^M$ for all $j \in \nats$. This implies that
$$||g(y)||_{\reals^M} = \lim_{j \to \infty} ||g_j(\iota(x_j)) - g_j(\iota(x_0))||_{\reals^M} \geq \lim_{j \to \infty} \frac{1}{L}\left(\frac{||\iota(x_j) -\iota(x_0)||_{V}}{r_j}\right)^\beta.$$
This shows that the sequence $\left\{\frac{\iota(x_j) -\iota(x_0)}{r_j}\right\}_{j \in \nats}$ is bounded in $(V,||\cdot||_V).$  Moreover, there is a quantity $c = c(A, L, \beta, y)$, such that for all $j \in \nats$,
$$d_{X}(x_j,x_0) \leq cr_j.$$ 

Let $k\in \nats$. Proposition~\ref{finite dim} now implies the existence of a positive integer $J(k)$ such that for all $j \geq J(k)$, there is a point $y_j \in F_{x_0}$ satisfying 
\begin{equation}\label{consequence}
\norm{\iota(x_j)-\iota(x_0)-y_j}_V\leq \frac{1}{k}r_j.
\end{equation}
Without loss of generality, we may assume that $J(k)<J(k+1)$ for all $k\in \nats$. By passing to a subsequence, we may further assume that $r_j=r_{J(j)}$. The triangle inequality now implies that the sequence $\{y_j/r_j\}_{j \in \nats}$ is bounded. As each element of the sequence is in the finite dimensional subspace $F_{x_0}$, after passing to a subsequence we may assume that  $\{y_j/r_j\}_{j \in \nats}$ converges to a point $\ovl{y} \in F_{x_0}$. It now follows from \eqref{consequence} that 
$$\lim_{j \to \infty} \frac{\iota(x_j)-\iota(x_0)}{r_j} = \ovl{y} \in F_{x_0}$$
as well.



We wish to define a map $G \colon \iota(C)_{\iota(x_0)} \to F_{x_0}$ by $G(y)=\ovl{y}$. Unfortunately, given points $y$ and $w$ in $\iota(C)_{\iota(x)}$, the subsequences of the scales $\{r_j\}_{j\in \nats}$ used to define $\ovl{y}$ and $\ovl{w}$ might not coincide.  We overcome this using the separability of the doubling space $\iota(C)_{\iota(x_0)}$. Let $\{z_i\}_{i \in \nats}$ be a countable dense set in $\iota(C)_{\iota(x_0)}$.  As argued above, we may find an infinite subset $S_1 \subeq \nats$, a sequence $\{x^1_j\}_{j \in S_1} \subeq C$, and a point $\ovl{z}_1 \in F_{x_0}$ such that 
\begin{equation}\label{G def} g(z_1)=\lim_{\substack{j \to \infty \\ j \in S_1}}g_j(\iota(x^1_j))\ \text{and}\  \lim_{\substack{j \to \infty \\ j \in S_1}} \frac{\iota(x^1_j)-\iota(x_1)}{r_j} = \ovl{z}_1\in F_{x_0}\end{equation}
We now define $G(z_1)=\ovl{z}_1.$ 
We continue inductively, defining $G(z_{i+1})$ similarly but requiring that $S_{i+1} \subeq S_{i}$.  

We now check that $G$ is an isometric embedding of the set $\{z_i\}_{i \in \nats}$ into $F_{x_0}$. Choose $i < k$ in $\nats$, and let $\{x^i_j\}_{j \in S_i}$ and $\{x^k_j\}_{j \in S_k}$ be the sequences in $C$ used to define $G(z_i)$ and $G(z_k)$ respectively.  Since $S_k \subeq S_i$, we have 

\begin{align*} g(z_i)=\lim_{\substack{j \to \infty \\ j \in S_k}}g_j(\iota(x^i_j)) & \ \text{and}\ G(z_i)=\lim_{\substack{j \to \infty \\ j \in S_k}}\frac{\iota(x^i_j)-\iota(x_0)}{r_j}, \\
					g(z_k)=\lim_{\substack{j \to \infty \\ j \in S_k}}g_j(\iota(x^k_j)) & \ \text{and}\ G(z_k)=\lim_{\substack{j \to \infty \\ j \in S_k}}\frac{\iota(x^k_j)-\iota(x_0)}{r_j}. \end{align*}
Since the functions
$$\left\{\frac{||g_j\inv(\cdot)-g_j\inv(\cdot)||_{V}}{r_j} \colon g_j(\iota(C)_j) \times g_j(\iota(C)_j) \to\reals\right\}_{j \in \nats}$$
converge to the function
$$\rho_\infty(g\inv(\cdot),g\inv(\cdot))  \colon  g(\iota(C)_{\iota(x_0)}) \times g(\iota(C)_{\iota(x_0)}) \to \reals,$$
we see that
$$\rho_\infty(z_i,z_k) = \lim_{\substack{j \to \infty \\ j \in S_k}} \frac{||\iota(x^i_j)-\iota(x^k_j)||_{V}}{r_j} = ||G(z_i)-G(z_k)||_V,$$
showing that $G$ is an isometric embedding. Since $\{z_i\}_{i \in \nats}$ is dense in $\iota(C)_{\iota(x_0)}$, the map $G$ extends uniquely to an isometric embedding of $\iota(C)_{\iota(x_0)}$ into the complete space $F_{x_0}$. \end{proof}

Since the tangent mapping $\iota_\infty \colon C_{x_0} \to \iota(C)_{\iota(x_0)}$ induced by $\iota \colon C \into V$ is bi-Lipschitz, we may record the following corollary.
\begin{corollary}\label{embedding to fd} Let $x_0 \in C \cap X_\alpha$ be a point at which \eqref{diff reminder} holds. Then the weak tangent space $C_{x_0}$ admits a bi-Lipschitz embedding into $F_{x_0}$.
\end{corollary}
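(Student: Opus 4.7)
The plan is to chain together the isometric embedding furnished by Proposition~\ref{dim bound} with the bi-Lipschitz tangent map $\iota_\infty$ constructed in Section~\ref{weak tan section1}. Since Proposition~\ref{dim bound} already produces an isometric embedding $G \colon \iota(C)_{\iota(x_0)} \to F_{x_0}$, the only ingredient needed is a bi-Lipschitz comparison between $C_{x_0}$ and $\iota(C)_{\iota(x_0)}$.

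First, I would invoke the observation made at the end of Section~\ref{weak tan section1}: the tangent mapping $\iota_\infty \colon C_{x_0} \to \iota(C)_{\iota(x_0)}$ arising as the limit of the mapping packages $\{(C_j, \iota(C)_j, \iota)\}_{j \in \nats}$ is an $A$-bi-Lipschitz bijection. Indeed, the bi-Lipschitz constant $A$ of $\iota|_C$ is scale-invariant and therefore passes to the limit under the convergence described in the bullet points of Section~\ref{weak tan section1}: for any convergent sequences $\{u_j \in C_j\}$ and $\{v_j \in C_j\}$ with limits $u, v \in C_{x_0}$, the identities $d_\infty(u,v) = \lim_j d_X(u_j, v_j)/r_j$ and $\rho_\infty(\iota_\infty(u), \iota_\infty(v)) = \lim_j \|\iota(u_j) - \iota(v_j)\|_V/r_j$ imply $A^{-1} d_\infty(u,v) \leq \rho_\infty(\iota_\infty(u), \iota_\infty(v)) \leq A d_\infty(u,v)$.

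Second, I would simply form the composition $G \circ \iota_\infty \colon C_{x_0} \to F_{x_0}$. Since $G$ is isometric and $\iota_\infty$ is $A$-bi-Lipschitz, the composition is $A$-bi-Lipschitz. This is the desired embedding, and the proof is complete. There is essentially no obstacle here; the entire content lies in Proposition~\ref{dim bound}, and the corollary is a one-line consequence once one records that the tangent map $\iota_\infty$ inherits the bi-Lipschitz constant of $\iota|_C$.
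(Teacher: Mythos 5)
Your proposal is correct and matches the paper's argument exactly: the paper derives the corollary by composing the isometric embedding of $\iota(C)_{\iota(x_0)}$ into $F_{x_0}$ from Proposition~\ref{dim bound} with the $A$-bi-Lipschitz tangent map $\iota_\infty$, whose bi-Lipschitz property is asserted at the end of Section~\ref{weak tan section1}. Your extra remark justifying why $\iota_\infty$ inherits the constant $A$ in the limit is a harmless elaboration of what the paper leaves as "easily seen."
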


We note that in fact the crucial property of $\iota$ used in proving Proposition \ref{dim bound} and Corollary \ref{embedding to fd} was not the differentiability \eqref{diff reminder} at the point $x_0$, but rather the fact that the image $\iota(C)$ is contained upto first order at $x_0$ in a finite dimensional space. 

\subsection{The proof of Theorem \ref{RNP non}}
We consider a complete and Ahlfors $Q$-regular metric space $(X,d,\mu)$ that supports a $Q$-Poincar\'e inequality, $Q \geq 1$, and a Banach space $V$ with the Radon-Nikodym property. We equip $(X,d,\mu)$ with a measurable differentiable structure 
$\{(X_\alpha,\varphi_\alpha)\})_{\alpha \in I}$. 

Let $\iota \colon X \to V$ be an embedding with an upper gradient in the Lorentz space $\eL^{Q,1}(X)$. As discussed in the proof of Theorem \ref{main},  \cite[Section~6]{SpaceFilling} implies that
 \begin{equation}\label{iota}\Hdim^Q_{X}(X\bslash \mathcal{S}_{\iota})=0 \ \text{and}\  \Hdim^Q_{V}(\iota(X)\bslash \iota(\mathcal{S}_{\iota}))=0.\end{equation}
In addition, the hypotheses of Theorem \ref{RNP non} state that 
\begin{equation}\label{iotainv}\Hdim^Q_V(\iota(X)\bslash \mathcal{S}_{\iota\inv})=0 \ \text{and} \ \Hdim^Q_{X}(X \bslash \iota\inv(\mathcal{S}_{\iota\inv}))=0.\end{equation}

By Lemma \ref{partition}, we may find a countable collection of closed sets $\{C_k\}_{k \in \nats}$ in $X$ such that $\iota|_{C_k}$ is Lipschitz and 
$$\mathcal{S}_\iota = \bigcup_{k \in \nats} C_k.$$
Since $X$ is doubling and hence separable, the space $\iota(X)$ is also separable. Again using Lemma~\ref{partition}, we find a countable collection of closed sets $\{D_l\}_{l \in \nats}$ in $Y$ such that $\iota\inv|_{D_l}$ is Lipschitz and 
$$\mathcal{S}_{\iota\inv} = \bigcup_{l \in \nats} D_l.$$

Fix $k \in \nats$. We note that $\iota(C_k)$ is closed in the separable space $\iota(X)$, as $\iota$ is an embedding. For each $l \in \nats$, set
$$D_{k,l} = \iota(C_{k}) \cap D_l \ \text{and}\ C_{k,l}=\iota\inv(D_{k,l}).$$
Then, for each pair $k,l \in \nats$, the set $C_{k,l}$ is closed and $\iota|_{C_{k,l}}$ is bi-Lipschitz. Moreover, \eqref{iota} and \eqref{iotainv} imply that 
$$\Hdim^Q_{X}\left( X \bslash \bigcup_{k,l \in \nats} C_{k,l} \right) =0.$$

Let $(X_\alpha,\varphi_\alpha)$ be a coordinate patch of minimal dimension $N(\alpha)=N$. Then we may find $k, l \in \nats$ so that 
$$\Hdim^Q_{X}(C_{k,l} \cap X_\alpha)>0.$$
Theorem \ref{Stepanov} implies that $\iota$ is differentiable at almost every point $\mathcal{S}_{\iota}$, and so we may find a point $x_0 \in C_{k,l} \cap X_{\alpha}$ that is both a density point of $C_{k,l}$ and a point of differentiability of $\iota.$ 

From the discussion in Section \ref{weak tan section1}, we may find some sequence of scales $\{r_j\}$ tending to zero such that 
the pointed metric spaces 
$$X_j = (X,d_X/r_j,x_0)$$
converge in the sense described in Section \ref{weak tan section1} to a weak tangent which we denote by $X_{x_0}$. Since $X$ is Ahlfors $Q$-regular, it follows that $X_{x_0}$, when equipped with the $Q$-dimensional Hausdorff measure, is Ahlfors $Q$-regular as well \cite[Lemma~9.7]{david_fractured_1997}. 

For ease of notation, we denote $C_{k,l}$ simply by $C$. Define pointed metric spaces 
\begin{align*} C_j &= (C,d_X/r_j,x_0), \\   \iota(C)_j &= (\iota(C),||\cdot||_V/r_j,\iota(x_0)).
\end{align*}
As discussed in Section \ref{weak tan section1}, after passing to a subsequence of the scales $\{r_j\}$ (which we do not re-label), 
the pointed mapping packages $\{(C_j,\iota(C)_j,\iota)\}_{j \in \nats}$ converge to a pointed mapping package $(C_{x_0},\iota(C)_{\iota(x_0)},\iota_\infty)$. Moreover, the mapping $\iota_\infty$ is bi-Lipschitz. By \cite[Lemmas~9.12 and 9.13]{david_fractured_1997}, the fact that $x_0$ is a density point of $C$ implies that $C_{x_0}$ and $X_{x_0}$ are isometric. Hence, $C_{x_0}$ is also Ahlfors $Q$-regular when equipped with the $Q$-dimensional Hausdorff measure. Corollary \ref{embedding to fd} now implies that $C_{x_0}$ isometrically embeds into the span of the differentials of $\iota$ at $x_0$, which is a vector space of dimension at most $N$. This implies that $Q \leq N$, as desired. \qed

\section{The sharpness of Theorem \ref{main}}\label{example section}
In this section we prove Theorem \ref{example1}. For ease of notation we denote the dimension of the ambient cube by $N \geq 2$, and instead use $n \in \nats$ as an iterative index.

Our construction requires a preliminary result regarding the capacity of a point. 

\begin{proposition}\label{capacity} Let $\mathcal{S}$ be a rearrangement invariant Banach function space on $\reals^N$ containing a compactly supported function $g \notin L^{N,1}(\reals^N).$ Then for all points $a \in \reals^N$, $\ep>0$ and all $\tau \in [0,1]$, there is a Lipschitz function $\phi \colon \reals^N \to [0,\tau]$ satisfying
\begin{itemize}
\item the support of $\phi$ is a compact subset of $B(a,\ep)$,
\item $\phi$ is constant with value $\tau$ on a neighborhood of the point $a$,
\item $||\Lip \phi||_{\mathcal{S}} \leq \ep.$
\end{itemize}
\end{proposition}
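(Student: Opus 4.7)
The key insight is that the $\mathcal{S}$-capacity of a point in $B(a,\ep)$ must vanish whenever $\mathcal{S}$ contains compactly supported functions outside the critical Lorentz space $L^{N,1}(\reals^N)$; the construction is a radial cutoff tuned to $g$.

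First I would radialize $g$. Since $g$ is compactly supported and not in $L^{N,1}(\reals^N)$, its decreasing rearrangement $g^*$ vanishes for all sufficiently large $t$ yet satisfies $\int_0^\infty t^{1/N-1} g^*(t)\,dt = \infty$. Writing $c_N$ for the volume of the unit ball in $\reals^N$ and defining $G(r) = g^*(c_N r^N)$, the change of variables $t = c_N r^N$ turns this divergence into $\int_0^\infty G(r)\,dr = \infty$. Because $G$ is decreasing and compactly supported, the divergence is necessarily concentrated at the origin: $\int_0^\rho G(r)\,dr = \infty$ for every $\rho > 0$. Moreover, the radial function $x \mapsto G(|x-a|)$ on $\reals^N$ has the same distribution function as $g$, so by rearrangement invariance
$$\|G(|\cdot - a|)\|_\mathcal{S} = \|g\|_\mathcal{S} < \infty.$$

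Next I would construct a sequence of radial Lipschitz cutoffs. Fix any $\rho \in (0,\ep)$. For each $k \in \nats$, choose $\rho_k \in (0,\rho)$ so that $M_k := \int_{\rho_k}^{\rho} G(r)\,dr \to \infty$ as $k \to \infty$, and define $\phi_k \colon \reals^N \to [0,\tau]$ by
$$
\phi_k(x) = \begin{cases} \tau, & |x-a| \leq \rho_k, \\ \dfrac{\tau}{M_k}\displaystyle\int_{|x-a|}^{\rho} G(s)\,ds, & \rho_k \leq |x-a| \leq \rho, \\ 0, & |x-a| \geq \rho. \end{cases}
$$
Since $g^*$ is finite on $(0,\infty)$, the value $G(\rho_k)$ is finite and bounds $G$ from above on $[\rho_k,\rho]$; hence $\phi_k$ is Lipschitz. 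By construction $\phi_k \equiv \tau$ on the neighborhood $B(a,\rho_k)$ of $a$, and $\supp \phi_k = \ovl{B}(a,\rho)$ is a compact subset of $B(a,\ep)$.

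Finally I would estimate $\|\Lip \phi_k\|_\mathcal{S}$. A direct computation gives $\Lip \phi_k(x) = \tau G(|x-a|)/M_k$ on the annulus and $\Lip \phi_k(x) = 0$ elsewhere, so pointwise $\Lip \phi_k(x) \leq \tau G(|x-a|)/M_k$. Monotonicity of the Banach function norm combined with the rearrangement identity above yields
$$
\|\Lip \phi_k\|_\mathcal{S} \leq \frac{\tau}{M_k}\,\|G(|\cdot - a|)\|_\mathcal{S} = \frac{\tau\,\|g\|_\mathcal{S}}{M_k} \xrightarrow[k \to \infty]{} 0,
$$
so any sufficiently large $k$ produces the desired $\phi = \phi_k$. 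The one point that requires genuine care is guaranteeing that $\int_0^\rho G(r)\,dr = \infty$ for arbitrarily small $\rho$: this is where the hypothesis $g \notin L^{N,1}(\reals^N)$ combined with the compact support of $g^*$ and $N \geq 2$ is essential, since it forces the Lorentz divergence, transported by the change of variable, to accumulate at $r=0$ rather than at a positive radius.
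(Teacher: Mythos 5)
Your construction is correct and is essentially the paper's own proof: both build a radial truncation of the profile $r\mapsto g^*(c_N r^N)$, normalized by $M_k=\int_{\rho_k}^{\rho}G$, which diverges as $\rho_k\to 0$ precisely because $g\notin L^{N,1}(\reals^N)$ and $g^*$ is finite off $t=0$, and both conclude via the equimeasurability of $G(|\cdot-a|)$ with $g$ together with rearrangement invariance. The only caveat is that the identity $\Lip\phi_k(x)=\tau G(|x-a|)/M_k$ should be read as holding almost everywhere (at jump radii of $g^*$ one only controls $\Lip\phi_k$ by the left limit of $G$), which is exactly how the paper states it and is all the norm estimate requires.
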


\begin{proof} It suffices to consider the case that $\tau=1$ and $a = 0$.  Let $g \colon \reals^N \to \reals$ be a compactly supported function in $\mathcal{S}\bslash L^{N,1}(\reals^N)$; denote the support of $g$ by $K$.  Then
$$||g||_{L^{N,1}} = \int_{0}^{\mathcal{L}^N(K)} t^{\frac{1}{N}-1} g^*(t) \ dt = \infty.$$
Define $u \colon [0,\mathcal{L}^N(K)] \to \reals$ by
$$u(r) = \int_{r}^{\mathcal{L}^N(K)} t^{\frac{1}{N}-1} g^*(t) \ dt.$$
Since $g \in \mathcal{S}$, the axioms of a Banach function space imply that it is finite $\mu$-almost everywhere. Hence $g^*(r)<\infty$ for each $r>0$, and so the non-increasing property of $g^*$ implies that
\begin{equation}\label{u infinity} u(0)-u(r) = \infty \end{equation}
for every $r>0$.

Fix $\ep>0$, and denote the volume of the unit ball in $\reals^N$ by $\Omega_N$.  For $0<\del<\ep$, define $\phi_{\ep,\del}\colon B(0,\ep) \to [0,1]$ by
$$\phi_{\ep,\del}(x) = \begin{cases}
								1 &  0 \leq |x| \leq \del,\\
								\lambda u(\Omega_N|x|^N) - \Lambda & \del \leq |x| \leq \ep/2,\\
								0 & |x|>\ep/2,\\
								\end{cases}$$
where $\lambda, \Lambda \geq 0$ are chosen to make $\phi_{\ep,\del}$ continuous, i.e.,
$$\lambda = (u(\Omega_N\del^N)-u(\Omega_N(\ep/2)^N))\inv, \ \text{and}\ \Lambda = \lambda u(\Omega_N(\ep/2)^N).$$
A calculation shows that $\phi_{\ep,\del}$ is Lipschitz, and that there is a constant $C>0$ depending only on $N$ such that for almost every $x \in \reals^N$,
$$\Lip \phi_{\ep,\del}(x) = \begin{cases} 	
									C\lambda g^*(\Omega_N|x|^N) & \del\leq |x|\leq \ep/2,\\
									0 & \text{otherwise.}\\
									\end{cases}
$$						
Another calculation shows that the functions $x \in \reals^N \mapsto g^*(\Omega_N|x|^N)$ and $t \in [0,\infty) \mapsto g^*(t)$ have the same distribution functions. Since $\mathcal{S}$ is rearrangement invariant, it follows that the function $x \in \reals^N \mapsto g^*(\Omega_N|x|^N)$ has finite $\mathcal{S}$-norm, and so
$$||\Lip \phi_{\ep,\del}||_{\mathcal{S}} = C\lambda ||g^*(\Omega_N|\cdot|^N)||_{\mathcal{S}}.$$
By \eqref{u infinity}, the quantity $\lambda$ tends to $0$ as $\del$ tends to $0$.  Thus, for sufficiently small $\del$, the function $\phi_{\ep,\del}$ satisfies the stated requirements.		
\end{proof}

We also require one elementary lemma for the proof of Theorem~\ref{example1}.

\begin{lemma}\label{tech lemma} There exists a sequence $\{a_n\}_{n \in \nats}$ of positive real numbers such that
$$\prod_{n=0}^\infty\left((1-a_n)^N-(2a_n)^N\right) > 0.$$
\end{lemma}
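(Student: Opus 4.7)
The plan is to choose $\{a_n\}$ tending to zero quickly enough that each factor is strictly positive and close enough to $1$ that the standard convergence test for infinite products applies. The factor $(1-a_n)^N - (2a_n)^N$ is positive precisely when $1-a_n > 2a_n$, i.e., when $a_n < 1/3$, so working with any summable sequence in $(0,1/3)$, for instance $a_n = 4^{-n-1}$, guarantees positivity term by term.

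Next I would estimate the deviation of each factor from $1$. Writing
\begin{equation*}
1 - \bigl((1-a_n)^N - (2a_n)^N\bigr) = \bigl(1 - (1-a_n)^N\bigr) + (2a_n)^N,
\end{equation*}
and invoking the Bernoulli inequality $(1-a_n)^N \geq 1 - Na_n$ for $a_n \in [0,1]$, I obtain
\begin{equation*}
0 \leq 1 - \bigl((1-a_n)^N - (2a_n)^N\bigr) \leq Na_n + (2a_n)^N.
\end{equation*}
For the choice $a_n = 4^{-n-1}$, both $\sum Na_n$ and $\sum (2a_n)^N$ are convergent geometric series.

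Finally I would apply the standard criterion stating that if $0 \leq c_n < 1$ and $\sum c_n < \infty$, then $\prod (1-c_n)$ converges to a strictly positive limit (which follows from $\log(1-c_n) \geq -2c_n$ for $c_n$ small, hence $\sum \log(1-c_n) > -\infty$). Applying this with $c_n = 1 - ((1-a_n)^N - (2a_n)^N)$, which lies in $[0,1)$ and is summable by the estimate above (after possibly discarding finitely many initial terms so that $c_n < 1/2$), yields the desired positivity of the product.

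The argument contains no real obstacle; the only mild subtlety is to remember that positivity of the individual factors, which is automatic for $a_n < 1/3$, must be verified separately from the convergence of the tail, which is handled by the summability estimate.
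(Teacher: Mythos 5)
Your proof is correct and takes essentially the same route as the paper: both arguments exploit that $(1-x)^N-(2x)^N\to 1$ as $x\to 0^+$ and reduce positivity of the infinite product to the summability of the logarithms of the factors. The paper simply chooses $a_n$ abstractly so that each factor exceeds $e^{-2^{-n}}$, while you supply an explicit sequence and a Bernoulli estimate; the difference is cosmetic.
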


\begin{proof} Noting that
$$\lim_{x \to 0^+} (1-x)^N - (2x)^N = 1$$
we may choose the sequence $\{a_n\}$ so that for each $n \in \nats$
$$(1-a_n)^N - (2a_n)^N > e^{-2^{-n}}.$$
The desired inequality now follows from the calculation that
\begin{align*}\log\left(\prod_{n=0}^\infty\left((1-a_n)^N-(2a_n)^N\right)\right) &= \sum_{n=0}^{\infty} \log\left((1-a_n)^N-(2a_n)^N\right) \\ &>-\sum_{n=0}^{\infty}2^{-n} > -\infty.\end{align*}
\end{proof}

\begin{proof}[Proof of Theorem~\ref{example1}]

Let $\{j_n\}_{n \in \nats}$ be a sequence of positive integers satisfying
$$j_0=0, \ j_n\equiv 0\ \text{mod}\ {3}, \ \text{and}\ j_{n+1} \geq 9(j_n +1).$$
Set
$$k_n = \frac{2j_n}{3} \ \text{and}\ l_n = \frac{j_{n+1}}{3} + \frac{2j_n}{3} +1.$$
By choosing the sequence $\{j_n\}$ to grow fast enough, we may assume that for each $n \in \nats$,
\begin{equation}\label{growth}
2^{j_n-l_n} = 2^{-\frac{1}{3}(j_{n+1}-j_n)-1} \leq a_n,
\end{equation}
where $\{a_n\}_{n \in \nats}$ is the sequence from Lemma \ref{tech lemma}.

We use these parameters to define a sequence of collections of dyadic cubes in $[-1,1]^N$.  Let
$$Q_0 = [-1,1]^N \ \text{and}\ \mathcal{Q}_0 = \{Q_0\}.$$
Fix $n \in \nats$, and assume that the collection of $\iQ_n$ of cubes of side-length $2\cdot 2^{-j_n}$ has been defined. Given $Q \in \iQ_n$ with center denoted by $a$, consider the standard subdivision of $Q$ into essentially disjoint cubes of side-length $2 \cdot 2^{-j_{n+1}}.$ We declare such a sub-cube to be an element of the collection $\iQ(Q)$ if its center $a'$ satisfies
$$2(2^{-l_n}) + 2^{-j_{n+1}} \leq ||a-a'||_{\infty} \leq 2^{-j_n} - 2^{-l_n} - 2^{-j_{n+1}}.$$
The set $\iQ(Q)$ is non-empty if the difference between the upper and lower bounds above is at least $2^{1-j_{n+1}}$. A calculation using the estimate $j_{n+1} \geq 9(j_n+1)$ shows that this is always the case.  We inductively define
$$\iQ_{n+1} = \{\iQ(Q) : Q \in \iQ_{n}\}.$$

Given a cube $Q \in \iQ_n$, we denote the concentric sub-cube of side-length $2\cdot 2^{-l_n}$ by $I_Q$. Note that for any $Q \in \iQ_{n}$, the cube $I_Q$ is disjoint from each cube in $\iQ_{n+1}$. Hence, given positive integers $n$ and $m$ and cubes $Q \in \iQ_{n}$ and $Q' \in \iQ_m$, the set $I_Q \cap I_{Q'}$ is empty unless $Q=Q'$. See Figure \ref{con}.

\begin{figure}[h]
\begin{center}
\begin{tikzpicture}
\coordinate (bl) at (0,0);
\coordinate (br) at (8.0,0);
\coordinate (tl) at (0,8.0);
\coordinate (tr) at (8.0,8.0);
\draw (bl)--(br)--(tr)--(tl)--(bl);

\foreach \x in {0,...,28}{
    \draw (.4+\x*.1,.4)--(.4+\x*.1,7.6);
    \draw (4.8+\x*.1,.4)--(4.8+\x*.1,7.6);
    \draw (.4,.4+\x*.1)--(7.6,.4+\x*.1);
    \draw (.4,4.8+\x*.1)--(7.6,4.8+\x*.1);
}

\foreach \x in {1,...,15}{
    \draw (3.2+\x*.1,.4)--(3.2+\x*.1,3.2);
    \draw (3.2+\x*.1,4.8)--(3.2+\x*.1,7.6);
    \draw (.4,3.2+\x*.1)--(3.2,3.2+\x*.1);
    \draw (4.8,3.2+\x*.1)--(7.6,3.2+\x*.1);
}

\fill [black!25] (3.6,3.6) rectangle (4.4,4.4);

\draw [->] (.8,-.6)--(1.3,-.1);
\node at (.8,-.6) [left] {$Q$};
\draw [->] (8.6,5.8)--(4.4,4.4);
\node at (8.6,5.8) [right] {$I_Q$};

\draw [->] (3.6,3.6)--(4.2,3.9);
\node at (3.65,3.65) [below] {$2^{-l_n}$};

\draw [->] (3.6,3.6)--(3.4,3.9);

\draw [|-|] (4,-.4)--(8,-.4);
\node at (6,-.4) [below] {$2^{-j_n}$};

\draw [|-|] (4,4)--(4.4,4);
\node at (4.2,4) [below] {};

\draw [|-|] (3.21,4)--(3.6,4);

\draw [|-|] (8.4,0)--(8.4,.4);
\node at (8.4, .2) [right] {$2^{-l_n}$};
\end{tikzpicture}
\end{center}
\caption{A cube $Q\in \mathcal{Q}_n$, shown with the collection of subcubes $\mathcal{Q}(Q)$}
\label{con}
\end{figure}
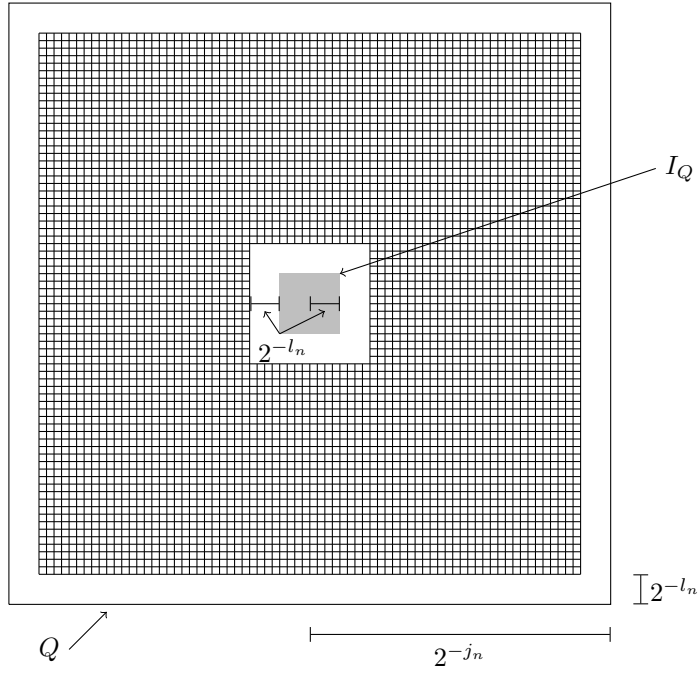

For each $n \in \nats$, fix a number $\ep_n>0$ so small that
$$(\card{\iQ_n})\cdot \ep_n \leq 2^{-n}.$$
By Proposition \ref{capacity}, for each $Q \in \mathcal{Q}_n$, there is a Lipschitz function $$\phi_Q \colon \reals^N \to [0,2^{-k_n}]$$ with the following properties:
\begin{itemize}
\item $\phi_Q \equiv 2^{-k_n}$ on a neighborhood of the center of $I_Q$,
\item $\phi_Q \equiv 0$ on a neighborhood of the boundary of $I_Q$,
\item $||\lip \phi_Q||_{\mathcal{S}} \leq \ep_n.$
\end{itemize}

We define a function $f \colon \reals^N \to \reals$ by declaring that for each $n \in \nats$ and $Q \in \iQ_n$,
$$f|_{I_Q} = \phi_Q,$$
and setting $f \equiv 0$ on the remaining subset of $\reals^N.$

We first claim that $\lip f(x) <\infty$ for every $x \in \reals^N.$ Set
\begin{align*} \mathcal{F} &= \left\{x \in \reals^N : \text{there is a largest $n \in \nats$ such that $x\in \bigcup_{Q \in \iQ_n}Q$}\right\}, \\
\mathcal{I} &=\left\{x \in \reals^N : x\in \bigcup_{Q \in \iQ_n}Q^0 \ \text{for every $n \in \nats$}\right\}, \end{align*}
where $Q^0$ denotes the interior of the cube $Q$. Note that these sets partition $\reals^N$ by construction.

If $x \in \mathcal{F}$ then either $f$ is identically zero in a neighborhood of $x$ or $f \equiv \phi_{Q'}$ on a neighborhood of $x$, for some $Q' \in \iQ_m$, $m \in \nats$. In either case, $\lip f(x) < \infty$.

If $x \in \mathcal{I}$, and $Q \in \iQ_{n}$ contains $x$, then
$$\dist \left(x,\bigcup_{i=0}^n \bigcup_{Q \in \iQ_i} I_Q\right) \geq 2^{-l_n}.$$
Thus, if $y \in \reals^N$ satisfies $|x-y| < 2^{-l_n}$, then $|f(x)-f(y)| \leq 2^{-k_{n+1}}.$ This implies that
$$\lip f(x) \leq \liminf_{n \to \infty} \frac{2^{-k_{n+1}}}{2^{-l_n}} = 0.$$

From the above discussion, we see that for all $x \in \reals^N$,
$$\lip f(x) = \sum_{n=0}^\infty \sum_{Q \in \iQ_n} \lip \phi_Q(x),$$
and thus, by the choice of the sequence $\{\ep_n\}_{n \in \nats}$,
$$||\lip f||_{\mathcal{S}} \leq 2.$$

We now show that $f$ fails to be differentiable at any point $x \in \mathcal{I}$. For each $n \in \nats$, we may find a cube $Q \in \iQ_n$ containing $x$ in its interior; denote the center of $Q$ by $a$. Let $b$ be the point of intersection of the boundary of $Q$ and the ray emanating from $a$ passing through $x$. Then
$$\frac{||f(a)-f(b)||}{||a-b||} \geq \frac{2^{-k_n}}{\sqrt{N}\cdot2^{-j_n}} = \frac{2^{j_n/3}}{\sqrt{N}}.$$
The triangle inequality and the collinearity of $a$, $b$, and $x$ now imply that there is a point $y \in \{a,b\}$ satisfying
$$\frac{||f(x)-f(y)||}{||x-y||} \geq \frac{2^{j_n/3}}{\sqrt{N}}.$$
It follows that
$$\Lip f(x) \geq \limsup_{n \to \infty} \frac{2^{j_n/3}}{2\sqrt{N}} = \infty.$$
This implies that $f$ is not differentiable at $x$.

Finally, we show that $\mathcal{I}$ has positive $N$-dimensional Hausdorff measure. First, note that for any $n \in \nats$ and $Q \in \iQ_n$,
$$\Hdim^N(Q) = (2\cdot 2^{-j_n})^N \ \text{and}\ \Hdim^N\left(\bigcup_{Q' \in \iQ(Q)}Q'\right) = (2\cdot 2^{-j_n}-2\cdot2^{-l_n})^N - (4\cdot2^{-l_n})^N.$$
Thus
$$\frac{\Hdim^N\left(\bigcup_{Q' \in \iQ(Q)}Q'\right)}{\Hdim^N(Q)} = (1-2^{j_n-l_n})^N - (2\cdot 2^{j_n-l_n})^N.$$
Note that
\begin{equation*}
\Hdim^N(\mathcal{I})=\lim_{n\to \infty} \Hdim^N(\cup_{Q'\in \iQ_n} Q').
\end{equation*}
Further, for $n \geq 1$,
\begin{equation*}
\begin{split}
\Hdim^N\left(\bigcup_{Q'\in \iQ_n} Q'\right)&=\Hdim^N\left(\bigcup_{Q\in \iQ_{n-1}}\bigcup_{Q'\in \iQ(Q)} Q'\right)=\sum_{Q\in \iQ_{n-1}}\Hdim^N\left(\bigcup_{Q'\in \iQ(Q)}Q'\right)\\
&=\sum_{Q\in \iQ_{n-1}}\frac{\Hdim^N\left(\bigcup_{Q'\in \iQ(Q)} Q'\right)}{\Hdim^N(Q)}\cdot \Hdim^N(Q)\\
&=
\left((1-2^{j_{n-1}-l_{n-1}})^N-(2\cdot 2^{j_{n-1}-l_{n-1}})^N\right)\Hdim^N\left(\bigcup_{Q\in \iQ_{n-1}}Q\right).
\end{split}
\end{equation*}

Thus
$$\Hdim^N(\mathcal{I})=\Hdim^N((-1,1)^N) \cdot \prod_{n=0}^\infty (1-2^{j_n-l_n})^N - (2\cdot 2^{j_n-l_n})^N.$$
The inequality \eqref{growth} and Lemma \ref{tech lemma} now complete the proof.\end{proof}

\bibliographystyle{alpha}
\bibliography{StepanovRNP}

\end{document}